\newcommand{\C}{\mathbb{C}}
\newcommand{\R}{\mathbb{R}}
\newcommand{\Z}{\mathbb{Z}}
\newcommand{\mc}[1]{\mathcal{#1}}
\newcommand{\g}{\mathfrak{g}}
\newtheorem{lemma}{Lemma}
\newtheorem{theorem}{Theorem}
\newtheorem{corollary}{Corollary}
\theoremstyle{definition}
\author{Friedrich Littmann}
\address{North Dakota State University, Department of Mathematics, Fargo, ND 58108-6050}
\email{Friedrich Littmann@ndsu.edu}
\author{Mark Spanier}
\address{Dakota State University, The Beacom College of Computer and Cyber Science, Madison, SD 57042 }
\email{Mark.Spanier@dsu.edu}
\title[Entire Gr\"unwald operators]{Weighted uniform convergence of entire Gr\"unwald operators on the real line}
\keywords{Gr\"unwald operator, Hermite-Fej\'er interpolation,  weighted uniform approximation, de Branges space, exponential type}
\subjclass[2010]{Primary 41A05; Secondary 41A17, 30E05}
\begin{document}
  
\maketitle

\begin{abstract}  We consider weighted uniform convergence of entire analogues of the Gr\"unwald operator on the real line.  The main result deals with convergence of entire interpolations of exponential type $\tau>0$ at zeros of Bessel functions in spaces with homogeneous weights. We discuss extensions to Gr\"unwald operators from de Branges spaces.
\end{abstract}

\section{Introduction and Results}
 
An entire function $F$, not identically zero, has exponential type if $\tau(F)$ defined by
\begin{align}\label{intro-type}
\tau(F) = \limsup_{|z|\to \infty} |z|^{-1}\log|F(z)|
\end{align}
is finite. The nonnegative number $\tau(F)$ is called the exponential type of $F$.

Let $w$ be a measurable, nonnegative function on $\R$; we call $w$ a weight.  We denote by $\mc{B}_p(\tau,w)$ the space of entire functions $F$ of exponential type $\tau\ge 0$ with $Fw\in L^p(\R)$. For functions $f:\R \to \C$ and a weight $w$, we seek discrete sets $\mc{T}\subseteq \R$  and $G_\tau f\in \mc{B}_\infty(\tau,w)$ with $G_\tau f(t) = f(t)$ for $t\in\mc{T}$  and
\begin{align}\label{G-conv}
\lim_{\tau\to\infty} \|(G_\tau f-f) w\|_\infty =0.
\end{align}

  By way of motivation we review known results from polynomial interpolation.  Fej\'er discovered the following property of interpolation at the zeros of the Chebyshev polynomials: denoting by $x_{n,k}$ the $k$th zero of the $n$th Chebyshev polynomial, there exists  a polynomial $H_{2n-1}$ of degree $2n-1$ with $H_{2n-1}(x_{n,k}) = f(x_{n,k})$ and $H_{2n-1}'(x_{n,k})=0$ such that for continuous $f$ the statement $\|f-H_{2n-1}\|_{L^\infty[-1,1]}\to 0$ as $n\to \infty$ holds.

There has been a large amount of research into analogous statements for weighted polynomial spaces where the interpolation points are chosen to be zeros of certain associated orthogonal polynomials, cf.  Horv\'ath \cite{Hor2007}, \ Lubinsky \cite{Lub1992}, Szabados \cite{Szaba1997,Szaba1999}, Szab\'o \cite{Szabo1999}, and the references therein. For earlier work we refer to the book by Szabados and V\'ertesi \cite{SV1990}.  There are different generalizations of Fej\'er's result; the so called Fej\'er-Hermite interpolation has derivative zero at the interpolation nodes, while the Gr\"unwald operator assigns a derivative value at each node that depends on the function and the location of the nodes. Concretely, the polynomial Gr\"unwald operator is given by
\[
z \mapsto \sum_{k=1}^n f(y_{n,k}) \ell_{n,k}^2(z)
\]
where $(y_{n,k})$ is a given set of nodes and $\ell_{n,k}$ is the $k$th Lagrange interpolating polynomial of degree $\le n$ for $(y_{n,k})$.  As we indicate below, the corresponding operator for functions of exponential type is in some sense the most natural generalization of Fej\'er's result to weighted spaces on the real line.  It was pointed out in Gervais, Rahman, and Schmeisser \cite{GRS1977} that the series
\begin{align}\label{bandlimitedFejer}
F_\tau f(z) = \sum_{k\in\Z} f(\tau^{-1}k) \frac{\sin^2(\tau z)}{(\tau z-k)^2}
\end{align}
has convergence properties entirely analogous to the Fej\'er result but no other entire Gr\"unwald operators seem to have been investigated.

 Let $\nu>-1$. Our first results deals with homogeneous weights
\[
w_\nu(x) = |x|^{2\nu+1}.
\]

Let $J_\nu$ be the Bessel function of order $\nu$ of the first kind.  We define entire functions $A_\nu$ and $B_\nu$ by
\begin{align}\label{ANuTauDef}
\begin{split}
A_\nu(z) &= \Gamma(\nu+1)(z/2)^{-\nu} J_\nu(z),\\
B_\nu(z) &= \Gamma(\nu+1)(z/2)^{-\nu} J_{\nu+1}(z),
\end{split}
\end{align}
and for   $\tau>0$ we define the formal series $G_{\nu,\tau} f, H_{\nu,\tau} f$ by
\begin{align}
\begin{split}
G_{\nu,\tau} f(z) &= \sum_{t\in \mc{T}_\nu} f(\tau^{-1} t) \frac{A_\nu^2(\tau z)}{A_\nu'(t)^2 (\tau z - t)^2}\\
H_{\nu,\tau} f(z) &=  \sum_{t\in \mc{T}_{\nu+1}} f(\tau^{-1} t) \frac{B_\nu^2(\tau z)}{B_\nu'(t)^2 (\tau z - t)^2}
\end{split}
\end{align}
where $\mc{T}_\nu = \{\pm \xi: J_\nu(\xi)=0, \xi > 0\}$.  

Let $f$ be continuous on $\R\backslash\{0\}$ and assume that $fw_\nu$ has a limit at the origin. We say that $fw_\nu$ has a  uniformly continuous extension to $\R$ if, after defining the value of $fw_\nu$ at the origin to be this limit, the resulting function is uniformly continuous on $\R$. 

\begin{theorem}\label{thm1intro} Let $\nu>-1$ with $\nu\neq -\frac12$ and $\tau>0$. If $f\in C(\R\backslash\{0\})$ with $fw_\nu\in L^\infty(\R)$, then $G_{\nu,\tau}f $ and $H_{\nu,\tau} f$ define entire functions of exponential type $2\tau$. If  in addition $fw_\nu$ has a uniformly continuous extension to $\R$ with
\[
\lim_{x\to 0} f(x) w_\nu(x) =0,
\]
then (a) for $\nu>-\frac12$  
\[
\lim_{\tau\to \infty} \|(G_{\nu,\tau}f-f)w_\nu\|_\infty =0,
\] 
and (b) for $-1<\nu<-\frac12$  
\[
\lim_{\tau\to \infty} \|(H_{\nu,\tau}f-f)w_\nu\|_\infty =0.
\]  
 \end{theorem}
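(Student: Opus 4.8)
The plan is to treat the two assertions separately. For entirety and exponential type, I would use the classical large-argument asymptotics of the Bessel function, which give for real $u\to\infty$
\[
A_\nu(u)=c_\nu\,|u|^{-\nu-\frac12}\bigl(\cos(u-\phi_\nu)+O(|u|^{-1})\bigr),\qquad \phi_\nu=\tfrac{\pi}{2}\nu+\tfrac{\pi}{4},
\]
with $c_\nu=\Gamma(\nu+1)2^\nu\sqrt{2/\pi}$; correspondingly the positive zeros of $A_\nu$ satisfy $t_k=k\pi+O(1)$ and $|A_\nu'(t_k)|=c_\nu|t_k|^{-\nu-\frac12}(1+O(|t_k|^{-1}))$. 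Since $fw_\nu\in L^\infty$ forces $|f(\tau^{-1}t)|\le\|fw_\nu\|_\infty\,\tau^{2\nu+1}|t|^{-(2\nu+1)}$, the coefficient series multiplying $A_\nu^2(\tau z)$, namely $\sum_t f(\tau^{-1}t)A_\nu'(t)^{-2}(\tau z-t)^{-2}$, is dominated termwise (for fixed $\tau$) by a constant times $\sum_k|\tau z-t_k|^{-2}$ and hence converges locally uniformly off the nodes. Multiplying by $A_\nu^2(\tau z)$ removes the double poles at $z=\tau^{-1}t$, so $G_{\nu,\tau}f$ is entire; it has type $2\tau$ because $A_\nu^2(\tau z)$ does and the remaining factor grows at most polynomially on vertical lines. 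The case of $H_{\nu,\tau}f$ is identical with $B_\nu$ and the zeros of $J_{\nu+1}$.

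For the convergence I would pass to $g:=fw_\nu$, which by hypothesis is bounded, uniformly continuous, and satisfies $g(0)=0$. Writing $\widetilde W_t(x):=\frac{w_\nu(x)}{w_\nu(\tau^{-1}t)}\,\frac{A_\nu^2(\tau x)}{A_\nu'(t)^2(\tau x-t)^2}\ge0$, the operator becomes a \emph{positive} averaging operator,
\[
(G_{\nu,\tau}f-f)(x)\,w_\nu(x)=\sum_{t\in\mc{T}_\nu}g(\tau^{-1}t)\,\widetilde W_t(x)-g(x).
\]
Inserting the asymptotics above, the homogeneous weight is seen to be exactly the right one: the factor $|t/(\tau x)|^{2\nu+1}$ produced by $A_\nu'(t)^{-2}$ cancels the $|\tau x/t|^{2\nu+1}$ coming from $w_\nu(x)/w_\nu(\tau^{-1}t)$, leaving $\widetilde W_t(x)\approx\cos^2(\tau x-\phi_\nu)(\tau x-t)^{-2}$. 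Thus, away from the origin, $\widetilde W_t$ is asymptotically the classical band-limited Fej\'er kernel of \eqref{bandlimitedFejer} acting on $g$, and the two facts I would isolate as lemmas are: (i) a uniform mass bound $\sum_{t}\widetilde W_t(x)\le C$ for all $x$ and $\tau$; and (ii) $\sum_t\widetilde W_t(x)\to1$ uniformly on $\{|x|\ge\delta\}$ for each $\delta>0$, together with the concentration of the kernel near $t=\tau x$.

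Granting these, convergence follows by splitting at a small $\delta$. For $|x|\ge\delta$ the argument $\tau x$ is large, so by (ii) one may subtract $g(x)\sum_t\widetilde W_t(x)$ up to a vanishing error and estimate $\sum_t[g(\tau^{-1}t)-g(x)]\widetilde W_t(x)$: the near nodes $|t-\tau x|\le R$ are controlled by the uniform continuity of $g$ (the node spacing is $\pi/\tau\to0$) while the far nodes contribute at most $C\|g\|_\infty\sum_{|t-\tau x|>R}(\tau x-t)^{-2}\le C\|g\|_\infty/R$, uniformly in $x$; choosing $R=R(\tau)\to\infty$ slowly drives this to $0$. For $|x|<\delta$ one instead exploits $g(0)=0$: here $|g(x)|$ is small, the nodes with $|\tau^{-1}t|<\delta'$ contribute at most $\varepsilon\sum_t\widetilde W_t(x)\le C\varepsilon$ by (i), and the nodes with $|\tau^{-1}t|\ge\delta'>\delta$ satisfy $|\tau x-t|\ge(\delta'-\delta)\tau$, so their total kernel mass is $\le C/\tau$. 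The choice of operator is dictated precisely by this region: for $\nu>-\tfrac12$ the weight $w_\nu$ vanishes at the origin and the kernel $A_\nu^2$ (with $A_\nu(0)=1$) keeps $\widetilde W_t$ bounded there, whereas for $-1<\nu<-\tfrac12$ the weight blows up at $0$ and one must use $H_{\nu,\tau}$, since $B_\nu^2(\tau x)\sim(\tau x)^2/(4(\nu+1)^2)$ vanishes at the origin and restores boundedness of the analogous kernel.

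I expect the genuine difficulty to lie in the transition region $\tau|x|=O(1)$ and more generally in establishing lemmas (i) and (ii) uniformly, where the Bessel asymptotics are not yet valid and the cancellation that produces the $\cos^2$ kernel is not yet visible; controlling the mass of $\widetilde W_t$ there — equivalently, the weighted Lebesgue/Christoffel function — is the crux, and it is exactly this point at which the exclusion $\nu\neq-\tfrac12$ and the dichotomy between $G_{\nu,\tau}$ and $H_{\nu,\tau}$ are forced.
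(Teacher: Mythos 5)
Your reduction is structurally the same as the paper's: you pass to $g=fw_\nu$, write the error as a sum against the nonnegative kernel $\widetilde W_t(x)=w_\nu(x)w_\nu(\tau^{-1}t)^{-1}A_\nu^2(\tau x)A_\nu'(t)^{-2}(\tau x-t)^{-2}$, split into near and far nodes, and treat a neighborhood of the origin separately using $g(0)=0$; your entirety/type argument and your $O(1/(\tau\delta))$ far-node Riemann-sum estimate match \eqref{1aSumMinus}. But your lemmas (i) and (ii) --- the uniform mass bound $\sum_t\widetilde W_t(x)\le C$ and the uniform convergence $\sum_t\widetilde W_t(x)\to1$ on $|x|\ge\delta$ --- are not side conditions to be ``granted'': they are the entire analytic content of the paper (Lemmas \ref{ALambdaNuTauBounds}--\ref{lemma2} for $\nu>-\frac12$ and Lemmas \ref{AGrowthNu2}--\ref{lemma7} for $-1<\nu<-\frac12$), and you yourself flag that your proposed route via Bessel asymptotics breaks down exactly where it matters, in the transition region $\tau|x|=O(1)$ where the $\cos^2$ main term has not yet separated from the error terms. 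As it stands, nothing in the proposal actually controls $\sum_t\widetilde W_t(x)$ uniformly in $x$ and $\tau$, so this is a genuine gap rather than a complete proof.

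The paper closes that gap by a mechanism one would not extract from asymptotics alone: it identifies $\sum_t\widetilde W_t(x)$ with $w_\nu(x)L_{\nu,\tau}(x)$, where $L_{\nu,\tau}$ is the extremal (Beurling--Selberg type) minorant of $1/w_\nu$ of exponential type $2\tau$ built by Gaussian subordination: a one-sided interpolant $\mc{A}_{\pi\lambda,\nu}(z^2)$ of $\g_\lambda$ at $\mc{T}_\nu$ is constructed from the Laplace transform of $1/F_\nu$ (with $F_\nu(z^2)=A_\nu^2(z)$ a Polya--Laguerre function) and then integrated in $\lambda$ against the measure in \eqref{homog-lambda-measure}. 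The interpolation-series identity $L_{\nu,\tau}=G_{\nu,\tau}(w_\nu^{-1})$ (Lemma \ref{lemma2}) gives your (i) in the sharp form $\sum_t\widetilde W_t\le 1$ (the minorant property), and the explicit Laplace-transform error bound (Lemma \ref{lemma1}) gives a quantitative form of (ii). This construction is also what genuinely forces the dichotomy: $G_{\nu,\tau}(1/w_\nu)$ minorizes $1/w_\nu$ only for $\nu>-\frac12$, and for $-1<\nu<-\frac12$ the whole construction must be redone with $B_\nu$ and the nodes $\mc{T}_{\nu+1}$ (Lemmas \ref{lemma6} and \ref{lemma7}) --- consistent with, but stronger than, your heuristic about $B_\nu^2$ vanishing at the origin. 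To complete your argument you must supply proofs of (i) and (ii); the minorant construction is the device the paper uses to do so.
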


 The usual approach to polynomial analogues of \eqref{G-conv}  consists in requiring a condition of the form $fv\in L^\infty(\R)$ with a different weight $v$ which is usually more restrictive than the target weight (but not always, see  Szab\'o \cite[Corollary 2]{Szabo1999}), prove \eqref{G-conv} for a dense set, and extend to the smaller space. In this paper we follow a different approach that is modeled after Fej\'er's original proof. We construct approximations $L_{\nu,\tau}$ to $1/w_\nu$ and use the identity
\begin{align}\label{intro-id}
(G_{\nu,\tau}f -f )w_\nu = (G_{\nu,\tau}f - f L_{\nu,\tau}  w_\nu) w_\nu + fw_\nu( L_{\nu,\tau} w_\nu -1).
\end{align}

A good candidate for $L_{\nu,\tau}$ is the extremal minorant  of $1/w_\nu$ among functions of exponential type $2\tau$ with respect to $L^1(w_\nu)$ norm (cf.\ \cite{CL2017}). We show in Lemmas \ref{lemma2} and \ref{lemma6} that $L_{\nu,\tau} = G_{\nu,\tau} w_\nu^{-1}$ for $\nu>-\frac12$ and $L_{\nu,\tau} = H_{\nu,\tau} w_\nu^{-1}$ for $-1<\nu<-\frac12$ which allows estimation of the first summand on the right. The difference $L_{\nu,\tau} - 1/w_\nu$ has a representation in terms of Laplace transforms that gives $L^\infty$ bounds to control the second summand. 

\medskip

\noindent{\it Remarks.}
\begin{enumerate}
\item Uniform convergence fails for  $f = 1/w_\nu$ if $\nu\neq \frac12$, i.e., the condition $f(x) w_\nu(x)\to 0$ as $x\to 0$ is necessary. (The case $\nu=-\frac12$ is the unweighted case where \eqref{bandlimitedFejer} is used.)

\item To obtain uniform convergence of $(G_{\nu,\tau}f-f)w_\nu$ for $-1<\nu<-\frac12$ requires considerably more restrictive conditions on $f$, and the same remark holds for uniform convergence of $(H_{\nu,\tau}f-f)w_\nu$ for $\nu>-\frac12$. This can be traced back to the fact that the series $G_{\nu,\tau} (1/w_\nu)$ and $H_{\nu,\tau}(1/w_\nu)$ do not minorize  $1/w_\nu$ for these choices of $\nu$.
\end{enumerate}

A second candidate for $L_{\nu,\tau}$ comes from the observation that the space of entire functions $F$ of exponential type $\tau$ with $\| F \|_{L^2(w_\nu)}<\infty$ is a reproducing kernel Hilbert space. Denoting by $K_{\nu,\tau}(w,z)$ the reproducing kernel, it follows from de Branges \cite[Theorem 22]{dB1968} that
\begin{align}\label{introRepKernel}
K_{\nu,\tau}(\bar{z},z)=  \sum_{t\in \mc{T}_\nu} K_{\nu,\tau}(\tau^{-1}t,\tau^{-1}t) \frac{A_\nu^2(\tau z)}{A_\nu'(t)^2 (\tau z - t)^2},
\end{align}
leading to a version of Theorem \ref{thm1intro} with  less general assumptions.

 An identity analogous to \eqref{introRepKernel} holds for every weight $w$ with the property that evaluation functionals are bounded on $\mc{B}_2(\tau,w)$. In particular, reproducing kernels enable us to deal with weights of the form $w(x) = |W(x)|^{-2}$ where $W$ is a Hermite-Biehler entire function of exponential type.  Since the corresponding statements require some notation from the theory of de Branges spaces, we give it as Theorem \ref{thm2} in Section \ref{dBspaces}.

\section{Notation and Bessel function estimates}\label{bessel-section}

Throughout this article $c_\nu, C_\nu$ denote unspecified positive  constants depending only on $\nu$. (Their value may change between lines.) We use the notation   $f(x,\tau) \simeq_\nu g(x,\tau)$ to mean that $f(x,\tau)\le c_\nu g(x,\tau)$ and $g(x,\tau) \le C_\nu f(x,\tau)$ for all $x$ and $\tau$. 

For $\lambda\ge 0$ and complex $z$ we denote by $\g_\lambda$ the Gaussian  
\[
\g_\lambda(z) = e^{-\pi\lambda z^2}.
\]

Bessel functions satisfy $J_\nu^2(x) + J_{\nu+1}^2(x) \simeq_\nu x^{-1}$ for $|x|\ge 1$. Hence  for real $x$
\[
A_{\nu}^2(x) + B_{\nu}^2 (x)\simeq_\nu\begin{cases}
1& \text{ if }|x| \le 1,\\
|x|^{-2\nu-1}& \text{ if }|x| \ge 1.
\end{cases}
\]

 A direct calculation gives
\begin{align*}
A_{\nu}'(z) &= -  \Gamma(\nu+1) (z/2)^{-\nu} J_{\nu+1}( z)\\
A_{\nu}''(z) &= \frac{2^\nu \Gamma(\nu+1) }{z^{2+\nu}}\left( (2\nu+4\nu^2 - z^2) J_\nu( z) - z(1+2\nu) J_{\nu-1}( z)\right)
\end{align*}
and this gives  $|A_{\nu}'(t)| \simeq_\nu | t|^{-\nu -\frac12}$ and $|A_{\nu}''(t)| \simeq_\nu   | t|^{-\nu - \frac32}$  for $t\in \mc{T}_{\nu}$. Analogous statements hold for $B_{\nu}$ and its derivatives, and they lead to the same bounds for $B_{\nu}'(u)$ and $B_{\nu}''(u)$ when $u\in \mc{T}_{\nu+1}$. These estimates are used repeatedly in the calculations below.

\section{The case $\nu>-\frac12$}

\subsection{Extremal minorants} We review the construction of extremal minorants  from \cite{CL2017}. In view of \eqref{homog-lambda-measure} below, we start by constructing entire minorants of exponential type $\tau$ of the Gaussian $\g_\lambda$ with nodes at the zeros of $A_\nu$. It was observed in \cite{CL2017} that this can be achieved by first constructing a minorant of type zero of the exponential $e^{-\lambda z}$ at zeros of $A_\nu(\sqrt{z})$ (see \eqref{Alambda-def} below) and then substituting $z\mapsto z^2$.  In \cite{CL2017} the integration in $\lambda$ is performed on the Fourier transform side to get maximum generality, but since we require $L^\infty$ estimates, we integrate the minorants directly. (The error estimates in Lemma \ref{lemma1} are not contained  in \cite{CL2017}.)

Since the derivative of a  minorant of $1/w_\nu$ must interpolate the derivative of $1/w_\nu$ at any interpolation point, we consider $A_\nu^2$ in the following.   We have
\[
A_\nu^2(z) =\prod_{j=1}^\infty \left(1-\frac{z^2}{\eta_{ \nu,j}^2}\right)^2
\]
where $0<\eta_{\nu,1}<\eta_{\nu,2}<...$  are the positive zeros of $J_\nu$. It is known that $\sum \eta_{\nu,j}^{-2}<\infty$.    Since $A_\nu^2$ is even, the function $F_\nu$ defined by $A_\nu^2(z) = F_\nu(z^2)$ is entire, nonnegative on $\R$, and has only positive (double) zeros $z_j=  \eta_{\nu,j}^2$.  We have $F_\nu(0)=1$ and
\begin{align*}
\sum_{j=1}^\infty z_j^{-1}<\infty. 
\end{align*}

It follows from the theory of Polya-Laguerre entire functions (e.g., \cite[Ch.\ III Corollary 5.4 and Ch.\ V Corollary 3.1]{HW1955}) that $g_\nu$ defined for real $t$ by
\begin{align}\label{gNuTauDef}
g_\nu(t) = \frac{1}{2\pi i } \int_{-i\infty}^{i\infty} \frac{e^{tz}}{F_\nu(z)} dz
\end{align}
is nonnegative, equal to zero on $[0,\infty)$, and satisfies for $\Re z<z_1$ the inversion formula
\[
\frac{1}{F_\nu(z)} = \int_{-\infty}^0 e^{-zt} g_\nu(t) dt.
\]

The inversion formula can be put in the form
\begin{align}\label{exp-rep}
e^{-\lambda z} = F_\nu(z) \int_{-\infty}^\lambda e^{-zt} g_\nu(t-\lambda) dt
\end{align}
in $\Re z<z_1$, and since $g_\nu\ge 0$ on $\R$ it follows that the entire function $\mc{A}_{\lambda,\nu}$ defined by
\begin{align}\label{Alambda-def}
\mc{A}_{\lambda,\nu}(z) = e^{-\lambda z} - F_\nu(z) \int_0^\lambda e^{-wz} g_\nu(w-\lambda) dw
\end{align}
satisfies $\mc{A}_{\lambda,\nu}(x) \le  e^{-\lambda x}$ for all real $x$. Evidently $\mc{A}_{\lambda,\nu}(x) = e^{-\lambda x}$ at the zeros of $F$. Hence, $z\mapsto \mc{A}_{\pi \lambda,\nu}(z^2)$ is a minorant of $\g_\lambda$ on $\R$ that interpolates $\g_\lambda$ at the points of $\mc{T}_\nu$. To simplify notation in the following we set
\[
\xi_1 = \frac14 \eta_{\nu,1}^2,\qquad \xi_2= \frac34 \eta_{\nu,1}^2.
\]

 \begin{lemma}\label{ALambdaNuTauBounds} Let $\lambda>0 $ and $\nu>-\frac12$. There exists $c_\nu>0$ with the following property.
\begin{enumerate}
\item For all complex $z$
\[
\left|e^{-\lambda z} - \mc{A}_{\lambda,\nu}(z) \right| \le c_\nu  |F_\nu(z)| \frac{ e^{-\lambda \xi_1 } - e^{-\lambda \Re z}}{ \Re z- \xi_1 }.
\]

\item In the half plane $\Re z\le \frac12  \eta_{\nu,1}^2$
\[
|\mc{A}_{\lambda,\nu}(z)|\le c_\nu |F_\nu(z)|  e^{-\lambda  \xi_2}.
\]
\end{enumerate}

Moreover,  the function $z\mapsto \mc{A}_{\lambda,\nu}(z^2)$ has exponential type $2$.
\end{lemma}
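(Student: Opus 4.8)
The plan is to reduce all three assertions to a single exponential bound on the inverse transform, namely
\[
g_\nu(t) \le c_\nu\, e^{\xi_2 t}\qquad (t\le 0),
\]
and then read everything off by direct estimation. Recall that $z_1=\eta_{\nu,1}^2$ is the first (double) pole of $1/F_\nu$ and that $\xi_1=\tfrac14 z_1$, $\xi_2=\tfrac34 z_1$, so $\xi_1<\xi_2<z_1$; since $g_\nu$ is supported on $(-\infty,0]$, the displayed bound also gives $g_\nu(t)\le c_\nu e^{\xi_1 t}$ there. For (a) I would rearrange \eqref{Alambda-def} to $e^{-\lambda z}-\mc{A}_{\lambda,\nu}(z)=F_\nu(z)\int_0^\lambda e^{-wz}g_\nu(w-\lambda)\,dw$, then take absolute values, pull out $|F_\nu(z)|$, and use $g_\nu\ge0$ with $g_\nu(w-\lambda)\le c_\nu e^{\xi_1(w-\lambda)}$:
\[
\Bigl|\int_0^\lambda e^{-wz}g_\nu(w-\lambda)\,dw\Bigr|\le c_\nu e^{-\xi_1\lambda}\int_0^\lambda e^{-w(\Re z-\xi_1)}\,dw=c_\nu\,\frac{e^{-\lambda\xi_1}-e^{-\lambda\Re z}}{\Re z-\xi_1},
\]
which is exactly the claim in (a) (the ratio is manifestly positive for every value of $\Re z$).

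For (b) I first combine \eqref{Alambda-def} with \eqref{exp-rep} to obtain the cancelled representation $\mc{A}_{\lambda,\nu}(z)=F_\nu(z)\int_{-\infty}^0 e^{-zt}g_\nu(t-\lambda)\,dt$, valid for $\Re z<z_1$. In the half-plane $\Re z\le z_1/2$ I bound $|e^{-zt}|=e^{-t\Re z}$ and use $g_\nu(t-\lambda)\le c_\nu e^{\xi_2(t-\lambda)}$; since then $\xi_2-\Re z\ge \xi_2-z_1/2=\xi_1>0$, the integral converges and
\[
\Bigl|\int_{-\infty}^0 e^{-zt}g_\nu(t-\lambda)\,dt\Bigr|\le c_\nu e^{-\xi_2\lambda}\int_{-\infty}^0 e^{t(\xi_2-\Re z)}\,dt=\frac{c_\nu e^{-\xi_2\lambda}}{\xi_2-\Re z}\le \frac{c_\nu}{\xi_1}\,e^{-\xi_2\lambda},
\]
which gives (b) after multiplying by $|F_\nu(z)|$.

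The substance of the argument, and the step I expect to be the main obstacle, is the exponential bound on $g_\nu$ itself. I would prove it by shifting the contour in the defining integral \eqref{gNuTauDef}. Because the only singularities of $1/F_\nu$ are the real poles $z_j\ge z_1>\xi_2$, the line $\Re z=0$ may be moved to $\Re z=\xi_2$ without crossing a pole, giving $g_\nu(t)=\frac{1}{2\pi i}\int_{\xi_2-i\infty}^{\xi_2+i\infty}e^{tz}F_\nu(z)^{-1}\,dz$ and hence $|g_\nu(t)|\le \frac{1}{2\pi}e^{\xi_2 t}\int_{-\infty}^\infty |F_\nu(\xi_2+is)|^{-1}\,ds$. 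The two points needing care are that the horizontal connecting segments vanish in the limit and that this last integral is finite; both follow from the growth of $F_\nu$ on vertical lines. Writing $F_\nu(z)=A_\nu^2(\sqrt z)$ and invoking the Bessel asymptotics from Section \ref{bessel-section}, I expect $|F_\nu(\xi_2+is)|\ge c_\nu e^{\sqrt{2|s|}}|s|^{-\nu-\frac12}$ for $|s|\ge1$, so that $1/F_\nu$ is integrable on $\Re z=\xi_2$ and tends to $0$ uniformly along horizontals as $|\Im z|\to\infty$. Establishing this clean off-axis lower bound on $|F_\nu|$ (with no oscillatory cancellation, since $\Im\sqrt{\xi_2+is}\to\infty$) is the delicate part of the whole lemma.

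Finally, for the exponential-type claim I would split the plane according to $\Re(z^2)$. Substituting $z^2$ in the cancelled representation and applying part (b) at the point $z^2$ (using $F_\nu(z^2)=A_\nu^2(z)$), I get $|\mc{A}_{\lambda,\nu}(z^2)|\le c_\nu|A_\nu^2(z)|$ whenever $\Re(z^2)\le z_1/2$. When $\Re(z^2)>z_1/2$ I use \eqref{Alambda-def} directly: there $|e^{-\lambda z^2}|=e^{-\lambda\Re(z^2)}\le 1$, while $e^{-w\Re(z^2)}\le1$ for $w\in[0,\lambda]$ forces $\bigl|\int_0^\lambda e^{-wz^2}g_\nu(w-\lambda)\,dw\bigr|\le\int_{-\infty}^0 g_\nu=F_\nu(0)^{-1}=1$, so $|\mc{A}_{\lambda,\nu}(z^2)|\le 1+|A_\nu^2(z)|$. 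In either region $|\mc{A}_{\lambda,\nu}(z^2)|\le C_\nu\bigl(1+|A_\nu^2(z)|\bigr)$, and since $A_\nu^2$ has exponential type $2$ (as $J_\nu$ has type $1$), the type of $z\mapsto\mc{A}_{\lambda,\nu}(z^2)$ is at most $2$. The reverse inequality follows by evaluating the cancelled representation on the imaginary axis, where the integral factor is positive and decays only polynomially while $|A_\nu^2(iy)|\simeq_\nu y^{-2\nu-1}e^{2y}$, so the type is exactly $2$.
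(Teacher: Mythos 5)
Your proposal is correct, and for parts (a), (b) and the type statement it follows exactly the paper's route: (a) by estimating the integral in \eqref{Alambda-def} directly with the bound $g_\nu(w-\lambda)\le c\,e^{\xi_1(w-\lambda)}$, (b) by first cancelling against \eqref{exp-rep} to get $\mc{A}_{\lambda,\nu}(z)=F_\nu(z)\int_{-\infty}^0 e^{-zt}g_\nu(t-\lambda)\,dt$ and then using the $\xi_2$-bound together with $\xi_2-\Re z\ge\xi_1$, and the type claim by combining the two regimes to bound $|\mc{A}_{\lambda,\nu}|/(1+|F_\nu|)$ on all of $\C$. The one place where you genuinely diverge is the key input \eqref{gNuTauEstimate}: the paper obtains $0\le g_\nu(t)\le c_\xi e^{\xi t}$ for every $\xi\in(0,\eta_{\nu,1}^2)$ as a citation of the Hirschman--Widder theory of P\'olya--Laguerre kernels (\cite[Ch.\ V Theorem 2.1]{HW1955}), which delivers nonnegativity, the support condition, and the exponential decay in one stroke for the whole class; you instead reprove the decay bound for this specific $F_\nu$ by shifting the contour in \eqref{gNuTauDef} to $\Re z=\xi_2$ and controlling $1/F_\nu$ on vertical lines via the Bessel asymptotics $|F_\nu(\xi_2+is)|\asymp|s|^{-\nu-\frac12}e^{\sqrt{2|s|}}$. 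Your version is more self-contained and the asymptotics do check out (the zeros of $A_\nu$ are real, so $\sqrt{\xi_2+is}$ stays away from them, and $1/F_\nu$ is integrable on the shifted line for $\nu>-\tfrac12$), but note that it only yields a bound on $|g_\nu|$; the nonnegativity of $g_\nu$, which the minorant construction needs, still has to come from the P\'olya--Laguerre theory as stated before the lemma. Your additional argument on the imaginary axis showing the type is exactly $2$ (not merely at most $2$) is a small completeness bonus that the paper leaves implicit.
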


\begin{proof}  Combining the properties of $g_\nu$ above with \cite[Ch. V Theorem 2.1]{HW1955} implies  that for every $\xi \in (0, \eta_{\nu,1}^2)$ there exists $c_\xi>0$ with
\begin{align}\label{gNuTauEstimate}
\begin{cases}
g_\nu(t) =0\text{ for }t>0,\\
0\le g_\nu(t) \le c_\xi  \exp( \xi t)\text{ for }t<0.
\end{cases}
\end{align}

Applying \eqref{gNuTauEstimate} with $\xi = \xi_1$  in \eqref{Alambda-def} gives for every complex $z$ 
\begin{align*}
\left|\mc{A}_{\lambda,\nu}(z) - e^{-\lambda z}\right| \le c_{\xi_1}  |F_\nu(z)| \int_0^\lambda e^{-w\Re z+  \xi_1 (w-\lambda)} dw
\end{align*}
and  (a) follows.
  
Let now $\Re z< \xi_2$. We replace the term $e^{-\lambda z}$ in \eqref{Alambda-def} by \eqref{exp-rep}, combine the integrals, apply \eqref{gNuTauEstimate} with  $\xi = \xi_2$, and obtain
\[
|\mc{A}_{\lambda,\nu}(z)|\le c_{\xi_2}   |F_\nu(z)|\int_{-\infty}^0 e^{(w-\lambda)(\xi_2-\Re z)} dw.
\]

After evaluation of the integration we further restrict to $\Re z\le \frac12   \eta_{\nu,1}^2$ which leads to (b). Moreover, since $e^{-\lambda z}$ is bounded in $\Re z\ge \frac12  \eta_{\nu,1}^2$, combining (a)  with (b) shows that  $|\mc{A}_{\lambda,\nu }|/(1+|F_\nu |)$  is bounded in $\C$ by a constant (which may depend on $\lambda$ and $\nu$). After substituting $z\mapsto z^2$ the final statement of the lemma follows  since $A_\nu$ has exponential type $1$.
\end{proof}

 For functions that are subordinated to the Gaussian $\g_\lambda(z) = e^{-\pi\lambda z^2}$,  minorants  can now be obtained by integrating an appropriate measure in $\lambda$. We use this to construct an approximation to $1/w_\nu$. The measure is obtained by noting that for $\nu>-\frac12$  and (real) $x\neq 0$
\begin{align}\label{homog-lambda-measure}
|x|^{-2\nu-1} = \pi^{\nu+\frac12} \Gamma\left(\nu+\tfrac12\right)^{-1} \int_0^\infty \g_\lambda(x) \lambda^{\nu-\frac12} d\lambda.
\end{align}

It follows from Lemma \ref{ALambdaNuTauBounds} that in the range $\Re(z^2)\le \frac12 \eta_{\nu,1}^2$ the integral (in $\lambda$) of $\mc{A}_{\pi\lambda,\nu }(z^2)$  with respect to the measure from \eqref{homog-lambda-measure} is convergent, while in the range $\Re(z^2)\ge \frac12 \eta_{\nu,1}^2$ the same is true for the integral of $\mc{A}_{\pi\lambda,\nu }(z^2) -  \g_\lambda(z)$. Since $\g_\lambda$ is also integrable in the latter region with respect to this measure,  the function $z\mapsto L_\nu (z)$ defined by
\[
L_\nu (z) =  \pi^{\nu+\frac12} \Gamma(\nu+\tfrac12) \int_0^\infty \mc{A}_{\pi\lambda,\nu}(z^2) \lambda^{\nu-\frac12} d\lambda
\]
is entire and a minorant of $|x|^{-2\nu-1}$ that interpolates this function at the zeros of $A_\nu $. Furthermore, investigating the bounds obtained from Lemma \ref{ALambdaNuTauBounds}(a) in $\Re z\ge  \frac14 \eta_{\nu,1}^2$ and Lemma \ref{ALambdaNuTauBounds}(b) in $\Re z\le  \frac14  \eta_{\nu,1}^2$ shows that $L_\nu $ has exponential type $2$. To obtain a minorant of type $2\tau$, we define
\[
L_{\nu,\tau}(z) = \tau^{2\nu+1} L_\nu(\tau z).
\]

\begin{lemma}\label{lemma1} Let $\nu>-\frac12$ and $\tau>0$. There exists $c_\nu>0$ so that for real $x$
\[
0\le |x|^{-2\nu-1}-L_{\nu,\tau}(x)  \le c_\nu A_\nu^2(\tau x) \frac{|x|^{-2\nu-1} - \left|\frac12 \tau^{-1} \eta_{\nu,1}\right|^{-2\nu-1} }{\frac14\eta_{\nu,1}^2 -\tau^2 x^2}.
\]

For   (real) $|x| \le \frac1{\sqrt{2}} \tau^{-1} \eta_{\nu,1}$  we also have
\[
0\le L_{\nu,\tau}(x)\le c_\nu A_\nu^2(\tau x) \left(\tfrac{\eta_{\nu,1}}{2\tau}\right)^{-2\nu-1}.
\]
\end{lemma}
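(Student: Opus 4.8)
The plan is to reduce both parts to a single integral representation of the defect $|x|^{-2\nu-1}-L_{\nu,\tau}(x)$, obtained by combining the Gaussian representation \eqref{homog-lambda-measure} with the definition of $L_{\nu,\tau}$. First I would record the scaling: applying \eqref{homog-lambda-measure} to $\tau x$ and multiplying by $\tau^{2\nu+1}$ gives $|x|^{-2\nu-1}=\tau^{2\nu+1}\kappa\int_0^\infty \g_\lambda(\tau x)\lambda^{\nu-\frac12}\,d\lambda$ with $\kappa=\pi^{\nu+\frac12}\Gamma(\nu+\tfrac12)^{-1}$, while by definition $L_{\nu,\tau}(x)=\tau^{2\nu+1}\kappa\int_0^\infty \mc{A}_{\pi\lambda,\nu}((\tau x)^2)\lambda^{\nu-\frac12}\,d\lambda$ with the same normalizing constant. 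Subtracting under a common integral sign, and using that $\mc{A}_{\pi\lambda,\nu}(z^2)\le \g_\lambda(z)$ so that the integrand is nonnegative, yields at once the left inequality $L_{\nu,\tau}(x)\le |x|^{-2\nu-1}$ of the first part.

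For the upper bound in the first part I would insert Lemma \ref{ALambdaNuTauBounds}(a) pointwise in $\lambda$ at the real point $z=\tau^2x^2$, using $F_\nu(\tau^2x^2)=A_\nu^2(\tau x)$ and $\xi_1=\frac14\eta_{\nu,1}^2$. This pulls the factor $A_\nu^2(\tau x)/(\tau^2x^2-\xi_1)$ out of the integral and leaves $\int_0^\infty\big(e^{-\pi\lambda\xi_1}-e^{-\pi\lambda\tau^2x^2}\big)\lambda^{\nu-\frac12}\,d\lambda$. Each exponential integrates to a Gamma factor, $\int_0^\infty e^{-a\lambda}\lambda^{\nu-\frac12}\,d\lambda=\Gamma(\nu+\tfrac12)a^{-\nu-\frac12}$, and the prefactors collapse because $\kappa\Gamma(\nu+\tfrac12)=\pi^{\nu+\frac12}$; tracking the powers of $\tau$ together with the identity $\tau^{2\nu+1}\xi_1^{-\nu-\frac12}=|\tfrac12\tau^{-1}\eta_{\nu,1}|^{-2\nu-1}$ turns the bracket into $|\tfrac12\tau^{-1}\eta_{\nu,1}|^{-2\nu-1}-|x|^{-2\nu-1}$. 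Dividing by $\tau^2x^2-\frac14\eta_{\nu,1}^2$ and flipping the signs of numerator and denominator simultaneously reproduces the stated right-hand side exactly.

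For the second part, on $|x|\le\frac1{\sqrt2}\tau^{-1}\eta_{\nu,1}$, i.e. $\tau^2x^2\le\frac12\eta_{\nu,1}^2<\eta_{\nu,1}^2=z_1$, I would first establish $L_{\nu,\tau}(x)\ge0$. This is not a consequence of the minorant property, so I would use \eqref{exp-rep} to rewrite \eqref{Alambda-def} as $\mc{A}_{\pi\lambda,\nu}(z)=F_\nu(z)\int_{-\infty}^0 e^{-zt}g_\nu(t-\pi\lambda)\,dt$ for real $z<z_1$; since $g_\nu\ge0$ and $F_\nu>0$ on $(-\infty,z_1)$, the integrand $\mc{A}_{\pi\lambda,\nu}(\tau^2x^2)$ is nonnegative for every $\lambda>0$, whence $L_{\nu,\tau}(x)\ge0$. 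For the upper bound I would invoke Lemma \ref{ALambdaNuTauBounds}(b), valid precisely on $\Re z\le\frac12\eta_{\nu,1}^2$, giving $|\mc{A}_{\pi\lambda,\nu}(\tau^2x^2)|\le c_\nu A_\nu^2(\tau x)e^{-\pi\lambda\xi_2}$ with $\xi_2=\frac34\eta_{\nu,1}^2$; integrating against $\tau^{2\nu+1}\kappa\lambda^{\nu-\frac12}\,d\lambda$ and applying the same Gamma evaluation produces $c_\nu A_\nu^2(\tau x)\,\tau^{2\nu+1}\xi_2^{-\nu-\frac12}$, and since $\xi_2=3\xi_1$ this differs from $\tau^{2\nu+1}\xi_1^{-\nu-\frac12}=(\eta_{\nu,1}/2\tau)^{-2\nu-1}$ only by the factor $3^{-\nu-\frac12}$, which is absorbed into $c_\nu$.

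I expect the main obstacle to be bookkeeping rather than any deep idea: carefully matching the two normalizing constants so that the numerator of the first estimate carries coefficient exactly $1$, tracking the simultaneous sign changes in numerator and denominator, and justifying the interchange of the $\lambda$-integral with the pointwise bounds of Lemma \ref{ALambdaNuTauBounds}. The domination needed for the interchange is already supplied by the convergence discussion following \eqref{homog-lambda-measure} (splitting at $\Re(z^2)=\frac12\eta_{\nu,1}^2$), and the nonnegativity step in the second part is the only place where a genuinely new representation, rather than a cited estimate, is required.
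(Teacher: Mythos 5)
Your proposal is correct and follows essentially the same route as the paper, whose proof is exactly the one-line recipe you expand: set $z=x^2$ in Lemma \ref{ALambdaNuTauBounds}, integrate against the measure from \eqref{homog-lambda-measure}, and rescale by $\tau$. Your added detail on the nonnegativity of $L_{\nu,\tau}$ via the representation $\mc{A}_{\pi\lambda,\nu}(z)=F_\nu(z)\int_{-\infty}^0 e^{-zt}g_\nu(t-\pi\lambda)\,dt$ is a sound way to supply a step the paper leaves implicit.
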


\begin{proof} These estimates follow by setting $z=x^2$ in Lemma \ref{ALambdaNuTauBounds}, integrate with respect to $\lambda$ against the measure from \eqref{homog-lambda-measure} to get the bounds for $\tau =1$, multiply by $\tau^{2\nu+1}$ and substitute $x\mapsto \tau x$. 
\end{proof}

\begin{lemma}\label{lemma2} Let $\nu>-\frac12$ and $\tau>0$. The identity
\[
L_{\nu,\tau}(z) = \sum_{t\in\mc{T}_\nu} |\tau^{-1}t|^{-2\nu -1} \frac{A_\nu^2(\tau z)}{A_\nu'( t)^2(\tau z-t)^2}
\]
holds for all complex $z$. 
\end{lemma}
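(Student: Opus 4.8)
The plan is to recognize the right-hand side as the Gr\"unwald series $G_{\nu,1}(w_\nu^{-1})$ and to prove the identity $L_{\nu,\tau}=G_{\nu,\tau}(w_\nu^{-1})$ by matching Hermite interpolation data at the nodes $\mc T_\nu$. First I would reduce to $\tau=1$: from $L_{\nu,\tau}(z)=\tau^{2\nu+1}L_\nu(\tau z)$ and $|\tau^{-1}t|^{-2\nu-1}=\tau^{2\nu+1}|t|^{-2\nu-1}$, the asserted identity is equivalent, after the substitution $z\mapsto\tau z$, to
\[
L_\nu(z)=\sum_{t\in\mc T_\nu}|t|^{-2\nu-1}\frac{A_\nu^2(z)}{A_\nu'(t)^2(z-t)^2}.
\]
The series on the right converges locally uniformly to an entire function of exponential type $2$: by $A_\nu'(t)^2\simeq_\nu|t|^{-2\nu-1}$ the $t$-th coefficient $|t|^{-2\nu-1}/A_\nu'(t)^2$ is $\simeq_\nu 1$, so each term is comparable to $A_\nu^2(z)/(z-t)^2$ and $\sum_{t\in\mc T_\nu}|z-t|^{-2}$ converges because the zeros of $A_\nu$ are roughly evenly spaced.

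Next I would pin down the value and first derivative of both sides at each node $t\in\mc T_\nu$. Since $L_\nu$ is a minorant of $w_\nu^{-1}(x)=|x|^{-2\nu-1}$ that interpolates it at $t$, and $w_\nu^{-1}$ is smooth at $t$ (because $t\neq0$), the nonnegative function $w_\nu^{-1}-L_\nu$ attains an interior minimum $0$ at $t$, so $L_\nu(t)=|t|^{-2\nu-1}$ and $L_\nu'(t)=(w_\nu^{-1})'(t)$. For the series, the fundamental function $\ell_t(z)=A_\nu(z)/(A_\nu'(t)(z-t))$ satisfies $\ell_t(t)=1$ and $\ell_t'(t)=A_\nu''(t)/(2A_\nu'(t))$, so $G_\nu(w_\nu^{-1})$ has value $|t|^{-2\nu-1}$ and derivative $|t|^{-2\nu-1}A_\nu''(t)/A_\nu'(t)$ at $t$. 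The crucial point is that these derivatives coincide: Bessel's equation yields $A_\nu''(z)+\frac{2\nu+1}{z}A_\nu'(z)+A_\nu(z)=0$, so at a zero $t$ of $A_\nu$ one has $A_\nu''(t)/A_\nu'(t)=-(2\nu+1)/t$, and a direct check (treating $t>0$ and $t<0$ separately) gives $|t|^{-2\nu-1}\cdot(-(2\nu+1)/t)=(w_\nu^{-1})'(t)$.

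Consequently $D=L_\nu-G_\nu(w_\nu^{-1})$ is entire of exponential type $\le2$ and vanishes to order at least $2$ at every $t\in\mc T_\nu$. Because $A_\nu$ is even with $A_\nu(0)=1$ and has simple zeros exactly at $\mc T_\nu$, the quotient $Q=D/A_\nu^2$ is entire, and the problem reduces to showing $Q\equiv0$. I would first argue $Q$ is constant: evaluating on circles $|\zeta|=R_n$ that pass through the gaps between consecutive zeros of $A_\nu$, where $|A_\nu(\zeta)|$ admits a lower bound of the expected order, together with the type-$2$ bound for $D$, shows $Q$ has at most polynomial growth and hence is a polynomial; the real-line decay $D(x)=O(|x|^{-2\nu-1})$ (which holds for $L_\nu$ by Lemma \ref{lemma1} and for $G_\nu(w_\nu^{-1})$ by the term-by-term estimates above), compared with $A_\nu^2(x)\simeq_\nu|x|^{-2\nu-1}$ at the peaks of $A_\nu^2$, forces that polynomial to be constant. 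To kill the constant I would use integrability against $w_\nu$: since $A_\nu^2 w_\nu$ is bounded but does not tend to $0$ — its peak values are $\simeq_\nu1$ — it is not integrable on $\R$, so $D=cA_\nu^2$ is compatible with $Dw_\nu\in L^1(\R)$ only when $c=0$, and $Dw_\nu\in L^1(\R)$ follows from the extremal $L^1(w_\nu)$ property of $L_\nu$ (giving $(w_\nu^{-1}-L_\nu)w_\nu\in L^1$) together with the integrability of the Gr\"unwald quadrature error for $w_\nu^{-1}$.

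The main obstacle is this final step — showing $Q$ is a constant and that the constant vanishes — since it is the only place where genuine growth and integrability estimates are needed; everything preceding it (the reduction to $\tau=1$, the tangency of the minorant, and the derivative identity from the $A_\nu$-differential equation) is routine. A structurally cleaner alternative that isolates the same difficulty is to prove the identity first for each Gaussian minorant, $\mc A_{\pi\lambda,\nu}(z^2)=G_\nu(\g_\lambda)(z)$, where the rapid decay of $\g_\lambda$ makes both the constancy of the analogous quotient and its vanishing (again via the non-integrability of $A_\nu^2 w_\nu$) transparent, and then to integrate in $\lambda$ against the measure in \eqref{homog-lambda-measure}, interchanging sum and integral and invoking \eqref{homog-lambda-measure} to pass from $G_\nu(\g_\lambda)$ to $G_\nu(w_\nu^{-1})$.
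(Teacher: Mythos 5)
Your route differs from the paper's: there, for each $\lambda>0$ one writes down a full Hermite-type expansion of $\mc{A}_{\pi\lambda,\nu}(z^2)$ at the zeros of $A_\nu$ (via Gon\c{c}alves' interpolation theorem, the derivative data being $\g_\lambda'(t)$ by tangency of the minorant), integrates in $\lambda$ against the measure in \eqref{homog-lambda-measure}, and only then uses the Bessel equation to cancel the derivative terms. Your reduction to $\tau=1$, the convergence and type estimates for the series, the tangency argument giving $L_\nu'(t)=(w_\nu^{-1})'(t)$, and the identity $A_\nu''(t)/A_\nu'(t)=-(2\nu+1)/t$ matching $(w_\nu^{-1})'(t)$ are all correct, and the last of these is also the mechanism behind the paper's simplification.

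The gap is in the step you yourself flag as the main obstacle: killing the constant in $D=L_\nu-G_\nu(w_\nu^{-1})=cA_\nu^2$. You deduce $c=0$ from $Dw_\nu\in L^1(\R)$, and for that you invoke ``the integrability of the Gr\"unwald quadrature error for $w_\nu^{-1}$'', i.e.\ $(w_\nu^{-1}-G_\nu(w_\nu^{-1}))w_\nu\in L^1(\R)$. This is nowhere established and is essentially as hard as the lemma itself: the natural decomposition produces the term $w_\nu^{-1}(x)\bigl(1-\sum_t\ell_t^2(x)\bigr)$, and $\sum_t\ell_t^2$ is not identically $1$ for $A_\nu$ (unlike for the cosine), while the remaining term $\sum_t(w_\nu^{-1}(t)-w_\nu^{-1}(x))\ell_t^2(x)$ contains a first-order piece comparable to $|x|^{-2\nu-2}A_\nu^2(x)\sum_t|A_\nu'(t)|^{-2}|x-t|^{-1}$, whose absolute sum diverges; one must exploit sign cancellation, which is exactly the derivative-term cancellation the paper engineers through the Hermite expansion and the Bessel ODE. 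So the decisive estimate is assumed rather than proved. A second, smaller error: the alternative you sketch at the end, $\mc{A}_{\pi\lambda,\nu}(z^2)=G_\nu(\g_\lambda)(z)$, is false as stated --- the left-hand side has derivative $\g_\lambda'(t)=-2\pi\lambda t\,\g_\lambda(t)$ at $t\in\mc{T}_\nu$, while the right-hand side has derivative $\g_\lambda(t)A_\nu''(t)/A_\nu'(t)=-\g_\lambda(t)(2\nu+1)/t$, and these disagree for generic $t$; the correct Gaussian-level statement is the expansion \eqref{ASeries} with derivative terms, which cancel only after the integration in $\lambda$.
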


\begin{proof}  It follows from Lemma \ref{ALambdaNuTauBounds}(a) that $x\mapsto \mc{A}_{\pi \lambda,\nu }(x^2)$ is in $L^1(w_\nu)$. The entire function $E_\nu  = A_\nu - i B_\nu$ is Hermite-Biehler, that is, the inequality $|E(z)|>|E(\bar{z})|$ holds for all $\Re z>0$ (cf.\ de Branges \cite[Section 50]{dB1968}). It follows that $z\mapsto \mc{A}_{\pi\lambda, \nu}(z^2)$ satisfies the assumptions of Gon\c{c}alves \cite[Theorem 1]{Gon2017}, see in particular the discussion of homogeneous spaces in \cite[Section 4.1]{Gon2017}. In place of $E_\nu$ we use $iE_\nu$ (which is also Hermite-Biehler) and obtain an interpolation series at the zeros of $A_\nu$.

The minorant property implies that $\g_\lambda(z) - \mc{A}_{\pi\lambda,\nu}(z^2)$ must have derivative equal to zero at $z = t\in\mc{T}_\nu$. Hence we obtain
\begin{align}\label{ASeries}
\begin{split}
\mc{A}_{\pi\lambda,\nu }(z^2) &= \sum_{t\in\mc{T}_\nu } \Bigg(\g_\lambda(t)  \frac{A_\nu^2(z)}{A_\nu '(t)^2(z-t)^2}\left( 1 -\frac{A_\nu ''(t)}{A_\nu'(t)}(z-t)\right) \\
&\qquad \qquad\qquad + \g_\lambda'(t)  \frac{A_\nu^2(z)}{A_\nu'(t)^2(z-t)}  \Bigg).
\end{split}
\end{align}

  Combining estimates from Section \ref{bessel-section}  with the classical fact that the zeros of $J_\nu$ grow at the same rate as the positive integers allows application of  Fubini to interchange summation in $t$ and integration in $\lambda$. It follows that $L_\nu $ has a representation obtained from \eqref{ASeries} by replacing $\g_\lambda(t)$  and $\g_\lambda'(t)$ with $w_\nu(t)^{-1}$
and $\partial/\partial t[w_\nu(t)^{-1}]$, respectively. The differential equation of the Bessel function gives
\[
(2\nu+1)A_\nu'(z) + z A_\nu''(z) = - z A_\nu(z),
\]
which implies that $A_\nu''(t)/A_\nu'(t) = -(2\nu+1)/t$ for $t\in \mc{T}_\nu$. It follows that the  series for $L_\nu$ simplifies to the right hand side of the claimed identity for $\tau =1$, and scaling in $\tau$ gives the general result. 
\end{proof}

\subsection{Gr\"unwald operator}\label{Nu1/2} We give the proof of Theorem \ref{thm1intro}(a). Let $\tau>0$ and $\nu>-\frac12$.  Since $fw_\nu\in L^\infty(\R)$, it follows that the series defining $G_{\nu,\tau}f$ converges uniformly on compact subsets of $\C$ and thus defines an entire function. Since   the difference of consecutive zeros of $A_\nu$ is $\simeq_\nu 1$ we see that $|G_{\nu,\tau}f(z)| \le c_\nu \tau^2 |A_\nu^2(\tau z)|$ and hence $G_{\nu,\tau }f$ has exponential type $2\tau$. (For $|\Im z|\le 1$ we use a contour integral of $(u-z)^{-1}G_{\nu,\tau}f(u)$ over a rectangle with vertical sides through zeros of $A_\nu$ that are at least distance $1$ away from $z$.)

Let $\varepsilon>0$. By assumption there exists $\delta>0$ so that  for $|x-y|<\delta$ and $|u|<\delta$
\begin{align}\label{continuity-assumptions}
\begin{split}
\left|f(y)w_\nu(y) - f(x)w_\nu(x)\right| &<\varepsilon \\
|f(u)w_\nu(u)|&<\varepsilon.
\end{split}
\end{align}

The identity \eqref{intro-id} takes the form
\begin{align}
 \big(&G_{\nu,\tau}(x) - f(x)\big) w_\nu(x) \label{DifferenceExpansion}\\
&= w_\nu(x)\sum_{t\in \mc{T}_\nu} \left(f(\tau^{-1}t)w_\nu(\tau^{-1}t) - f(x)w_\nu(x)\right)  \frac{ w_\nu(\tau^{-1}t)^{-1} A_\nu^2(\tau x)}{A_\nu'(t)^2(\tau x-t)^2} \nonumber\\
&\hspace*{4cm}+ f(x)w_\nu(x) (L_{\nu,\tau}(x)w_\nu(x) -1).\nonumber
\end{align}

 We consider the sum first and partition $\mc{T}_\nu$ into $t$ with $|\tau^{-1}t-x|< \delta$ and  with $|\tau^{-1}t-x|\ge \delta$.  We observe for all $\tau>0$
\begin{align}
 w_\nu(x)\sum_{|\tau^{-1}t-x|<\delta} &\left(f(\tau^{-1}t)w_\nu(\tau^{-1}t) - f(x)w_\nu(x)\right)  \frac{ |\tau^{-1}t|^{-2\nu-1} A_\nu^2(\tau x)}{A_\nu'(t)^2(\tau x -t)^2} \nonumber\\
&\le \varepsilon \,  w_\nu(x) L_{\nu,\tau}(x)\le \varepsilon.\label{1aSumPlus} 
\end{align}

For the second sum we use that $fw_\nu$ is uniformly bounded on $\R$ (by $M$, say), and we obtain
\begin{align}\label{Mbound}
\begin{split}
w_\nu(x) \sum_{|\tau^{-1}t-x|\ge \delta} \big(&f(\tau^{-1}t)w_\nu(\tau^{-1}t) - f(x) w_\nu(x)\big)  \frac{ |\tau^{-1}t|^{-2\nu-1} A_\nu^2(\tau x)}{A_\nu'(t)^2(\tau x-t)^2}\\
&\le 2M w_\nu(x)  A_\nu^2(\tau x) \sum_{|\tau^{-1}t-x|\ge \delta}  \frac{ |\tau^{-1}t|^{-2\nu-1}}{A_\nu'(t)^2(\tau x-t)^2}
\end{split}
\end{align}
 
Combining the estimates of Section \ref{bessel-section} with $w_\nu(x) \le w_\nu(1/\tau)$ for  $|x|\le 1/\tau$  gives $w_\nu(x) A_\nu^2(\tau x) \le c_\nu w_\nu(1/\tau)$ for all $x$ and $\tau$.  Denote by $t_+$ the zero of $A_\nu$ greater  (smaller) than $t$ if $t$ is positive (negative).   Since  $1\le c_\nu |t-t_+|$ for all   zeros of $A_\nu$, the final expression in \eqref{Mbound} is 
\begin{align}\label{1aSumMinus}
\le c_\nu  M   \frac{1}{\tau} \sum_{\substack{ t\in \mc{T}_\nu \\ |\tau^{-1}t-x|\ge \delta}} \frac{|\tau^{-1}t-\tau^{-1}t_+|}{( x-\tau^{-1}t)^2}\le  \frac{c_\nu  M }{\tau \delta}
\end{align}
where the last inequality above follows by recognizing the series as a Riemann sum for the integral of $u\mapsto (x-u)^{-2}$ on $\R\backslash[x-\delta,x+\delta]$. 

It remains to analyze $w_\nu  f  ( L_{\nu,\tau} w_\nu -1)$. It follows from Lemma \ref{lemma1} that 
\begin{align*}
|L_{\nu,\tau}(x)w_\nu(x) -1|\le c_\nu A_\nu^2 (\tau x) w_\nu(x) \frac{\left| w_\nu(x)^{-1} - w_\nu(\frac{\eta_{\nu,1}}{2\tau})^{-1}\right|}{\frac{\eta_{\nu,1}^2}{4} - \tau^2 x^2} 
\end{align*}
which implies that $L_{\nu,\tau}(x)w_\nu(x) -1$ converges to zero uniformly for $|x|\ge \delta$ as $\tau\to \infty$. For $|x|\le \delta$ we use $0\le L_{\nu,\tau} w_\nu\le 1$. Combining this with \eqref{continuity-assumptions},  \eqref{1aSumPlus}, and \eqref{1aSumMinus} leads to
\[
\limsup_{\tau\to \infty} \| (G_{\nu,\tau} f -f) w_\nu\|_\infty < 2\varepsilon,
\]
and hence the claim of Theorem \ref{thm1intro}(a).

\section{The case $-1<\nu<-\frac12$}

\subsection{Extremal minorants} For the construction of the minorant of $1/w_\nu$ with $-1<\nu<-\frac12$ only a few modifications need to be made. We have
\[
B_\nu(z) = z \prod_{j=1}^\infty \left(1-\frac{z^2}{\eta_{\nu+1,j}^2}\right)
\]
where $0<\eta_{\nu+1,1}<\eta_{\nu+1,2}<...$ are all positive zeros of $J_{\nu+1}$. We define $F_\nu$ by  $B_\nu^2(z) = F_\nu(z^2)$, and we observe that $F_\nu$ is entire, nonnegative on $(0,\infty)$, negative on $(-\infty,0)$ with a simple zero at the origin, double zeros at $z_j=\eta_{\nu+1,j}^2$, and no other zeros in $\C$. We set
\[
g_\nu(t) = \frac{1}{2\pi i} \int_{-1-i\infty}^{-1+i\infty} \frac{e^{tz}}{F_\nu(z)} dz
\]
and we note that $g_\nu$ is nonpositive on $\R$, equal to zero on $[0,\infty)$ and satisfies \eqref{exp-rep} for $\Re z<0$. Defining $\mc{A}_{\lambda,\nu}$ by \eqref{Alambda-def}, the sign of $g_\nu$ gives
\[
\mc{A}_{\lambda,\nu}(x^2)\ge \g_\lambda(x)
\]
for real $x$.  (Since $F_\nu$ has a zero at the origin for $-1<\nu<-\frac12$, the  function $g_\nu$ is bounded on $t<0$, but does not converge to zero as $t\to -\infty$.)  An integration by parts shows that the two-sided Laplace transform of $g_\nu'(t)$ equals the reciprocal of $F_\nu(z)/z$ in $\Re z<\eta_{\nu+1,1}^2$ (hence in particular $g_\nu'\ge 0$), and it follows that $g'$ satisfies \eqref{gNuTauEstimate} for $\xi<\eta_{\nu+1,1}^2$. We set
\[
 \xi = \frac14\eta_{\nu+1,1}^2
\]
in the following two lemmas.

\begin{lemma}\label{AGrowthNu2} Let $-1<\nu<-\frac12$ and $\lambda>0$. Then $\mc{A}_{\lambda,\nu}$ is an entire function of exponential type $2$ that satisfies the following growth estimates.
\begin{enumerate}
\item For all complex $z$
\[
\left|\mc{A}_{\lambda,\nu}(z) - e^{-\lambda z}\right| \le  \frac{|F_\nu(z)|}{|z|} \left(  |g_\nu(-\lambda)|+     \int_0^\lambda e^{-w \Re z + (w-\lambda)\xi} g_\nu'(w) dw\right)
\]
\item For $\Re z\le 0$
\[
|\mc{A}_{\lambda,\nu}(z)| \le -\frac{|F_\nu(z)|}{|z|} |g_\nu(-\lambda)| + \frac{e^{-\lambda  \xi}}{|z - \xi|}
\]

\end{enumerate}
\end{lemma}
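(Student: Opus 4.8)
The plan is to reduce both estimates to the representation \eqref{Alambda-def} together with the Laplace-type identity \eqref{exp-rep}, exploiting the sign and decay information recorded before the lemma: $g_\nu\le 0$ with $g_\nu=0$ on $[0,\infty)$ and $g_\nu$ merely bounded (not decaying) on $(-\infty,0)$, while $g_\nu'\ge 0$, $g_\nu'=0$ on $(0,\infty)$, and $g_\nu'$ satisfies \eqref{gNuTauEstimate} with $\xi=\tfrac14\eta_{\nu+1,1}^2$. The one genuinely new feature compared with Lemma \ref{ALambdaNuTauBounds} is the simple zero of $F_\nu$ at the origin: it prevents $g_\nu$ itself from being dominated by an exponential, so I cannot estimate the integrals in $g_\nu$ directly. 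The device that resolves this — and the step I expect to carry the proof — is to integrate by parts once in the integration variable so as to replace $g_\nu$ by $g_\nu'$, picking up both the factor $z^{-1}$ visible in the statement and a boundary term proportional to $g_\nu(-\lambda)$.

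For part (a) I would start from \eqref{Alambda-def} in the form $\mc{A}_{\lambda,\nu}(z)-e^{-\lambda z}=-F_\nu(z)\int_0^\lambda e^{-wz}g_\nu(w-\lambda)\,dw$ and integrate by parts in $w$. Since $g_\nu(0)=0$ the boundary term at $w=\lambda$ drops out, the boundary term at $w=0$ yields a contribution of modulus $|F_\nu(z)|\,|g_\nu(-\lambda)|/|z|$, and the remaining integral becomes $z^{-1}F_\nu(z)\int_0^\lambda e^{-wz}g_\nu'(w-\lambda)\,dw$. Taking absolute values (using $g_\nu'\ge 0$) and inserting the exponential estimate $g_\nu'(w-\lambda)\le c_\xi e^{\xi(w-\lambda)}$ from \eqref{gNuTauEstimate} turns the last integral into $c_\xi\int_0^\lambda e^{-w\Re z+(w-\lambda)\xi}\,dw$, which gives the displayed bound. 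Because the integral runs over the compact interval $[0,\lambda]$ there is no convergence constraint, so this holds for every complex $z$.

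For part (b), where $\Re z\le 0$ forces $e^{-\lambda z}$ to grow, I would first substitute \eqref{exp-rep} for $e^{-\lambda z}$ in \eqref{Alambda-def} and combine the two integrals into $\mc{A}_{\lambda,\nu}(z)=F_\nu(z)\int_{-\infty}^0 e^{-zt}g_\nu(t-\lambda)\,dt$, valid for $\Re z<0$. Integrating by parts as above, the boundary term at $t=-\infty$ now vanishes precisely because $\Re z<0$ and $g_\nu$ is bounded, the boundary term at $t=0$ produces the contribution carrying $|g_\nu(-\lambda)|$, and the remaining integral $z^{-1}F_\nu(z)\int_{-\infty}^0 e^{-zt}g_\nu'(t-\lambda)\,dt$ is handled by inserting the same exponential majorant for $g_\nu'$; this reduces it to the elementary complex Laplace integral $\int_{-\infty}^0 e^{(\xi-z)t}\,dt=(\xi-z)^{-1}$, which supplies the factor $|z-\xi|^{-1}$ weighted by $e^{-\lambda\xi}$. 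Tracking the $F_\nu(z)/z$ prefactors and the sign $g_\nu(-\lambda)\le 0$ then yields the stated inequality, with the boundary case $\Re z=0$ following by continuity.

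Finally, for the type assertion I would combine the two estimates exactly as in Lemma \ref{ALambdaNuTauBounds}: for $\Re z\ge 0$ part (a) together with $|e^{-\lambda z}|\le 1$ bounds $|\mc{A}_{\lambda,\nu}(z)|$ by a $\lambda$-dependent multiple of $1+|F_\nu(z)|$, and for $\Re z\le 0$ part (b) does the same, so $|\mc{A}_{\lambda,\nu}|/(1+|F_\nu|)$ is bounded on $\C$. Since $B_\nu$ has exponential type $1$ and $F_\nu(z^2)=B_\nu^2(z)$, the substitution $z\mapsto z^2$ then shows that $z\mapsto\mc{A}_{\lambda,\nu}(z^2)$ has exponential type $2$. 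The only delicate points are the vanishing of the boundary term at $-\infty$ (which is exactly why the region in (b) is $\Re z\le 0$ rather than all of $\C$) and the bookkeeping of the prefactors and signs; the conceptual crux remains the passage from $g_\nu$ to $g_\nu'$ via integration by parts, forced by the non-decay of $g_\nu$ coming from the zero of $F_\nu$ at the origin.
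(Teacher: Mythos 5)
Your proposal is correct and follows essentially the same route as the paper: the paper likewise integrates by parts in \eqref{Alambda-def} to trade $g_\nu$ for $g_\nu'$ (producing the $F_\nu(z)/z$ prefactor and the boundary term $g_\nu(-\lambda)$), combines with \eqref{exp-rep} to get the representation \eqref{A-rep-neg} on the left half plane for part (b), and then deduces the exponential type exactly as in Lemma \ref{ALambdaNuTauBounds}. Your identification of the integration by parts as the device forced by the non-decay of $g_\nu$ (coming from the simple zero of $F_\nu$ at the origin) is precisely the point of the paper's argument.
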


\begin{proof}   An integration by parts shows 
\begin{align*}
\mc{A}_{\lambda,\nu}(z) -e^{-\lambda z} &= -\frac{F_\nu(z)}{z} g_\nu(-\lambda) -\frac{F_\nu(z) }{z} \int_0^\lambda e^{-wz} g_\nu'(w-\lambda) dw
\end{align*}
  which gives  (a). Combining with \eqref{exp-rep} gives for $\Re z < \xi$
\begin{align}\label{A-rep-neg}
\mc{A}_{\lambda,\nu}(z) = \frac{F_\nu(z)}{z} \left(-g_\nu(-\lambda) + \int_{-\infty}^0 e^{-zt} g_\nu'(w-\lambda) dw\right)
\end{align}
leading to (b). As in the proof of Lemma \ref{ALambdaNuTauBounds} it follows that $z\mapsto \mc{A}_{\lambda,\nu}(z^2)$ is entire and has exponential type $2$.
\end{proof}

 We require sharper estimates on the real line.

\begin{lemma}\label{lemma5} For $-1<\nu<-\frac12$, $\lambda>0$, and $x>0$
\[
0\le \mc{A}_{\lambda,\nu}(x) - e^{-\lambda x} \le c_\nu \frac{F_\nu(x)}{x} \left(\frac{1-e^{-\lambda  \xi}}{\xi} - \frac{e^{-\lambda x} - e^{-\lambda  \xi}}{ \xi -x}\right)
\]
\end{lemma}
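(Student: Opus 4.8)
The plan is to reduce the statement to the exact integration-by-parts identity already established in the proof of Lemma \ref{AGrowthNu2}(a), specialized to a positive real argument, and then to exploit the two structural facts about $g_\nu$ available in this range of $\nu$: that $g_\nu'\ge 0$ and that $g_\nu'$ obeys the exponential majorant \eqref{gNuTauEstimate} with $\xi=\tfrac14\eta_{\nu+1,1}^2$. First I would record the identity from that proof, valid for all complex arguments and in particular for real $x>0$,
\[
\mc{A}_{\lambda,\nu}(x)-e^{-\lambda x}=-\frac{F_\nu(x)}{x}\,g_\nu(-\lambda)-\frac{F_\nu(x)}{x}\int_0^\lambda e^{-wx}g_\nu'(w-\lambda)\,dw.
\]

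Next I would rewrite the first term so that it combines with the integral. Since $g_\nu$ vanishes on $[0,\infty)$ and $g_\nu(0^-)=0$, the fundamental theorem of calculus gives $g_\nu(-\lambda)=-\int_{-\lambda}^0 g_\nu'(s)\,ds=-\int_0^\lambda g_\nu'(w-\lambda)\,dw$. Substituting this and merging the two integrals collapses the right-hand side to the single expression
\[
\mc{A}_{\lambda,\nu}(x)-e^{-\lambda x}=\frac{F_\nu(x)}{x}\int_0^\lambda\bigl(1-e^{-wx}\bigr)g_\nu'(w-\lambda)\,dw,
\]
from which the lower bound is immediate: for $x>0$ one has $F_\nu(x)/x>0$, the factor $1-e^{-wx}\ge 0$, and $g_\nu'(w-\lambda)\ge 0$ because $w-\lambda<0$ throughout the interval, so the whole quantity is nonnegative.

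For the upper bound I would insert $g_\nu'(w-\lambda)\le c_\nu e^{\xi(w-\lambda)}$ from \eqref{gNuTauEstimate}, factor out $e^{-\lambda\xi}$, and reduce to the two elementary integrals $\int_0^\lambda e^{\xi w}\,dw=(e^{\lambda\xi}-1)/\xi$ and $\int_0^\lambda e^{(\xi-x)w}\,dw=(e^{(\xi-x)\lambda}-1)/(\xi-x)$. Evaluating and regrouping, with the $e^{-\lambda\xi}$ distributed back in, reproduces exactly the bracketed expression $\tfrac{1-e^{-\lambda\xi}}{\xi}-\tfrac{e^{-\lambda x}-e^{-\lambda\xi}}{\xi-x}$ in the statement; the apparent singularity at $x=\xi$ is removable and poses no difficulty.

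The step requiring the most care is the passage $g_\nu(-\lambda)=-\int_0^\lambda g_\nu'(w-\lambda)\,dw$. Unlike the case $\nu>-\tfrac12$, here $g_\nu$ does not decay at $-\infty$ (it tends to the residue of $1/F_\nu$ at the origin), so one cannot integrate $g_\nu'$ up from $-\infty$ and must instead anchor the integration at $0$, using $g_\nu(0^-)=0$. This is precisely the normalization that makes the boundary term vanish in the integration by parts underlying Lemma \ref{AGrowthNu2}, so it is already implicit in the identity we start from; I would simply flag it explicitly. Everything after that—the positivity and the two integral evaluations—is routine.
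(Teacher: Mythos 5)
Your proposal is correct and follows essentially the same route as the paper: the merged identity $\mc{A}_{\lambda,\nu}(x)-e^{-\lambda x}=\frac{F_\nu(x)}{x}\int_0^\lambda(1-e^{-wx})g_\nu'(w-\lambda)\,dw$ is, after the substitution $u=w-\lambda$, exactly the identity the paper starts from, and the upper bound is obtained in both cases by applying \eqref{gNuTauEstimate} to $g_\nu'$ and evaluating the resulting elementary integrals. You merely supply details the paper leaves implicit (the derivation of the identity from the integration by parts in Lemma \ref{AGrowthNu2}, anchored at $g_\nu(0^-)=0$, and the explicit integral evaluations), which is consistent with how the paper itself uses $g_\nu(-\lambda)=-\int_{-\lambda}^0 g_\nu'$ in the proof of Lemma \ref{lemma6}.
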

\begin{proof} We have for $x>0$ 
\begin{align*}
\mc{A}_{\lambda,\nu}(x) -e^{-\lambda x} &= \frac{F_\nu(x)}{x} \int_{-\lambda}^0 (1- e^{-x(u+\lambda)}) g_\nu'(u) du.
\end{align*}
An application of \eqref{gNuTauEstimate} for $g'$ gives the upper inequality.
\end{proof}

The identity
\begin{align}\label{homog-lambda-measure-2}
|x|^{-2\nu-1} = \pi^{\nu+\frac12}\left(- \Gamma\left(\nu+\tfrac12\right)\right)^{-1} \int_0^\infty (1-\g_\lambda(x)) \lambda^{\nu-\frac12} d\lambda,
\end{align}
valid for  $-1<\nu<-\frac12$  and (real) $x\neq 0$, suggests the definition
\[
L_\nu(z) = \pi^{\nu+\frac12} \left(-\Gamma\left(\nu+\tfrac12\right)\right)^{-1}   \int_0^\infty (1-\mc{A}_{\pi\lambda,\nu}(z^2)) \lambda^{\nu-\frac12} d\lambda,
\]
and as before, $L_{\nu,\tau}(z) = \tau^{2\nu+1} L_\nu(\tau z).$

\begin{lemma}\label{lemma6} Let $-1<\nu<-\frac12$.  The integral defining $L_\nu$ converges uniformly on compact subsets of $\C$ and is an entire function of exponential type $2 $. We have for real $x$
\begin{align*}
0&\le |x|^{-2\nu-1} - L_{\nu,\tau}(x)\\
& \le \frac{B_\nu^2(\tau x)}{(\tau x)^2} \left( \frac{|\frac12 \tau^{-1} \eta_{\nu+1,1}|^{-2\nu-1}}{\frac14 \eta_{\nu+1,1}^2} - \frac{|x|^{-2\nu-1} - |\frac12 \tau^{-1}\eta_{\nu+1,1}|^{-2\nu-1}}{\frac14 \eta_{\nu+1,1}^2 - \tau^2 x^2}\right).
\end{align*}
\end{lemma}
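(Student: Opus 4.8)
The plan is to mirror the construction used for $\nu>-\frac12$: first establish analyticity and exponential type from the growth estimates of Lemma \ref{AGrowthNu2}, and then obtain the two-sided bound by integrating the sharp real-line estimate of Lemma \ref{lemma5} against the measure in \eqref{homog-lambda-measure-2} and rescaling via $L_{\nu,\tau}(x)=\tau^{2\nu+1}L_\nu(\tau x)$.

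First I would settle convergence and type. Near $\lambda=0$ the integrand $1-\mc{A}_{\pi\lambda,\nu}(z^2)$ vanishes to first order in $\lambda$ (from \eqref{Alambda-def}, $\mc{A}_{0,\nu}\equiv 1$ and the $\lambda$-derivative is bounded on compacta), so $(1-\mc{A}_{\pi\lambda,\nu}(z^2))\lambda^{\nu-\frac12}$ is integrable there because $\nu+\frac12>-1$. For $\lambda\to\infty$ I would invoke Lemma \ref{AGrowthNu2}: since $g_\nu(-\lambda)$ stays bounded and $|\mc{A}_{\pi\lambda,\nu}(z^2)|$ is controlled by $|F_\nu(z^2)|=|B_\nu^2(z)|$ times a bounded factor, the integrand is $O(\lambda^{\nu-\frac12})$, which is integrable since $\nu-\frac12<-1$. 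These bounds are locally uniform in $z$, so $L_\nu$ is a locally uniform limit of the entire partial integrals and hence entire. For exponential type I would split $\C$ by the sign of $\Re(z^2)$ relative to the threshold of Lemma \ref{AGrowthNu2}, applying part (b) where $\Re(z^2)\le 0$ and part (a) elsewhere, then integrate the resulting bounds in $\lambda$; the only factor depending on $z$ that grows is $B_\nu^2$, which has exponential type $2$, so the same half-plane splitting used to bound $L_\nu$ in the case $\nu>-\frac12$ yields type $2$ here as well.

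For the lower bound I would use $\mc{A}_{\pi\lambda,\nu}(x^2)\ge \g_\lambda(x)$ on $\R$ together with the positivity of $\pi^{\nu+\frac12}(-\Gamma(\nu+\tfrac12))^{-1}$ (note $\Gamma(\nu+\tfrac12)<0$ for $-1<\nu<-\tfrac12$): subtracting the representation \eqref{homog-lambda-measure-2} of $|x|^{-2\nu-1}$ from the definition of $L_\nu$ exhibits the nonnegative integrand $\mc{A}_{\pi\lambda,\nu}(x^2)-\g_\lambda(x)$, so $L_\nu\le|x|^{-2\nu-1}$. For the upper bound I would insert the pointwise estimate of Lemma \ref{lemma5} with Gaussian argument $x^2$ (so $F_\nu(x^2)=B_\nu^2(x)$ and $e^{-\pi\lambda x^2}=\g_\lambda(x)$), then integrate against $\pi^{\nu+\frac12}(-\Gamma(\nu+\tfrac12))^{-1}\lambda^{\nu-\frac12}\,d\lambda$. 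Each of the two summands in the bracket of Lemma \ref{lemma5} reduces by \eqref{homog-lambda-measure-2}: the first gives $\xi^{-\nu-\frac12}/\xi$ and the second a difference quotient built from $\xi^{-\nu-\frac12}$ and $|x|^{-2\nu-1}$, where $\xi=\tfrac14\eta_{\nu+1,1}^2$ and $\xi^{-\nu-\frac12}=|\tfrac12\eta_{\nu+1,1}|^{-2\nu-1}$. This yields the asserted inequality for $\tau=1$, and multiplying by $\tau^{2\nu+1}$ and substituting $x\mapsto\tau x$ produces the two bracket terms for general $\tau$. I would track the signs of the difference quotient carefully, since the individual Laplace integrals $\int_0^\infty e^{-\pi\lambda x^2}\lambda^{\nu-\frac12}\,d\lambda$ diverge at $\lambda=0$ for $\nu-\tfrac12<-1$ and only their differences (as in \eqref{homog-lambda-measure-2}) converge.

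The main obstacle is the part that differs genuinely from the case $\nu>-\frac12$: because $F_\nu$ vanishes at the origin, $g_\nu$ no longer decays at $-\infty$ (only $g_\nu'$ does, which is exactly why Lemma \ref{AGrowthNu2} was proved through an integration by parts carrying the boundary term $g_\nu(-\lambda)$). Consequently $1-\mc{A}_{\pi\lambda,\nu}(z^2)$ does not tend to $0$ as $\lambda\to\infty$, and both the convergence of the defining integral and the exponential-type bound rest entirely on the weight $\lambda^{\nu-\frac12}$ with $\nu-\frac12<-1$ and on the uniform boundedness of the $g_\nu(-\lambda)$ term. Making these two facts interact cleanly — in particular verifying that the region $\Re(z^2)\le 0$, where the naive bound through $e^{-\pi\lambda z^2}$ grows in $\lambda$, is fully absorbed by Lemma \ref{AGrowthNu2}(b) — is where I expect the real care to be needed.
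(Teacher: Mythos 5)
Your treatment of the two-sided inequality is exactly the paper's: the lower bound comes from $\mc{A}_{\pi\lambda,\nu}(x^2)\ge\g_\lambda(x)$ together with $\Gamma(\nu+\tfrac12)<0$, and the upper bound from integrating Lemma \ref{lemma5} (with argument $x^2$) against the measure in \eqref{homog-lambda-measure-2} and rescaling; your identification of the two bracket terms with $\xi^{-\nu-\frac12}/\xi$ and the difference quotient built from $\xi^{-\nu-\frac12}$ and $|x|^{-2\nu-1}$ is correct, as is the convergence analysis at $\lambda=0$ and $\lambda=\infty$ for $z$ in a fixed compact set (hence entirety).

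The gap is in the exponential-type estimate on the set $\Re(z^2)\le 0$. There you propose to apply Lemma \ref{AGrowthNu2}(b) and ``integrate the resulting bounds in $\lambda$'', but that bound does not vanish as $\lambda\to 0$ (neither the constant coming from $|1-\mc{A}|\le 1+|\mc{A}|$ nor the term $e^{-\lambda\xi}/|z^2-\xi|$ does), so its integral against $\lambda^{\nu-\frac12}\,d\lambda$ diverges at the origin since $\nu-\frac12<-1$. Your patch --- first-order vanishing of $1-\mc{A}_{\pi\lambda,\nu}(z^2)$ at $\lambda=0$ via a bounded $\lambda$-derivative --- is justified only on compacta; as $\Re(z^2)\to-\infty$ the two summands of $1-\mc{A}_{\lambda,\nu}(w)$ in \eqref{Alambda-def} individually grow like $e^{\lambda|\Re w|}$, which is of order $2$ in $z$, and only their cancellation controls the growth. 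This is precisely where the paper does something your outline does not: for $\Re w\le -1$ it combines \eqref{A-rep-neg} with the inversion formula for $g_\nu'$ to obtain
\[
1-\mc{A}_{\lambda,\nu}(w)=\frac{F_\nu(w)}{w}\left(g_\nu(-\lambda)+\int_{-\infty}^0 e^{-wt}\bigl(g_\nu'(t)-g_\nu'(t-\lambda)\bigr)\,dt\right),
\]
and then uses $g_\nu'(t)-g_\nu'(t-\lambda)=\mc{O}(1-e^{\lambda t})$ together with $g_\nu(-\lambda)=\mc{O}(\min(\lambda,1))$ to get a bound of the form $c\,|F_\nu(w)|\,|w|^{-1}\min(\lambda,1)$, uniform in that half-plane and integrable against $\lambda^{\nu-\frac12}\,d\lambda$. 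Without this identity, or an equivalent argument exhibiting the cancellation uniformly in $z$, the claim that $L_\nu$ has exponential type $2$ is not established. The rest of your proposal, including the flagged concern about the non-decay of $g_\nu$ at $-\infty$, matches the paper.
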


\begin{proof} Writing $g_\nu(-\lambda)$ as an integral of its derivative on $[-\lambda,0]$ and observing the growth estimates for $g_\nu'$  shows that $g_\nu(-\lambda) \lambda^{\nu-\frac12}$ is integrable on $[0,\infty)$. Similarly, the integral in Lemma \ref{AGrowthNu2}(a) is integrable with respect to $\lambda^{\nu-\frac12} d\lambda$, which gives the first part of the lemma. This also gives the necessary estimates in $\Re z\ge -1$ to show the claimed exponential growth in this half plane.

Let $\Re z\le -1$. Equation \eqref{A-rep-neg} combined with the inversion formula for $g_{\nu,\tau}'$ gives 
\begin{align*}
1-\mc{A}_{\lambda,\nu}(z) = \frac{F_\nu(z)}{z} \left(g_\nu(-\lambda) + \int_{-\infty}^0 e^{-zt} \big(g_\nu'(w) -g_\nu'(w-\lambda) \big) dw\right).
\end{align*}

Since $g_\nu'(w-\lambda) - g_\nu'(w) = \mc{O}(1-e^{\lambda w})$, the integral can be integrated in $\lambda$ with respect to the measure $\lambda^{\nu-\frac12} d\lambda$, and the growth estimates in $\Re z\le -1$ follow as above. We leave the details to the reader. The minorant property follows from $\mc{A}_{\lambda,\nu}(x^2)\ge G_\lambda(x)$ and $\Gamma(\nu+\frac12)<0$.

The last inequality of the lemma follows by integrating the inequality of Lemma \ref{lemma5} in $\lambda$, substituting $x^2$ for $x$, observing \eqref{homog-lambda-measure-2}, and scaling by $\tau$.
\end{proof}

\begin{lemma}\label{lemma7} Let $-1<\nu<-\frac12$ and $\tau>0$. The identity
\[
L_{\nu,\tau}(z) = \sum_{t\in\mc{T}_{\nu+1}} |\tau^{-1}t|^{-2\nu -1} \frac{B_\nu^2(\tau z)}{B_\nu'(t)^2(\tau z-t)^2}
\]
holds for all complex $z$. 
\end{lemma}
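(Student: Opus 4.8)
The plan is to mirror the proof of Lemma~\ref{lemma2}, the one genuinely new feature being the simple zero of $B_\nu$ at the origin, which is the unique zero of $B_\nu$ outside $\mc{T}_{\nu+1}$. Write $\phi_\lambda(z) = \mc{A}_{\pi\lambda,\nu}(z^2)$. By Lemma~\ref{AGrowthNu2} this is entire of exponential type $2$, and since $\phi_\lambda$ is bounded near the origin while $w_\nu(x)=|x|^{2\nu+1}$ is integrable there for $\nu>-1$, and $\phi_\lambda(x)$ is controlled by $\g_\lambda(x)$ at infinity (Lemma~\ref{lemma5}), we have $\phi_\lambda\in L^1(w_\nu)$. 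First I would apply Gon\c{c}alves \cite[Theorem~1]{Gon2017} to the Hermite--Biehler function $E_\nu=A_\nu-iB_\nu$ to obtain an interpolation series for $\phi_\lambda$ at the zeros of $B_\nu$, that is, at $\{0\}\cup\mc{T}_{\nu+1}$; the homogeneous case is covered by \cite[Section~4.1]{Gon2017}. Because $\phi_\lambda(x)\ge\g_\lambda(x)$ with equality at each of these nodes --- including the origin, where both functions equal $1$ and have vanishing derivative --- the majorant property forces the Hermite data of $\phi_\lambda$ at a node $t$ to be $\bigl(\g_\lambda(t),\g_\lambda'(t)\bigr)$, giving the exact analogue of \eqref{ASeries} summed over $\{0\}\cup\mc{T}_{\nu+1}$.

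The next step is to isolate and remove the origin. Since $B_\nu$ is odd we have $B_\nu''(0)=0$, so the node $t=0$ contributes the single term $B_\nu^2(z)\,B_\nu'(0)^{-2}z^{-2}$, which is independent of $\lambda$ because $\g_\lambda(0)=1$ and $\g_\lambda'(0)=0$. Letting $\lambda\to0^+$, so that $\phi_\lambda\to\phi_0\equiv1$ locally uniformly by \eqref{Alambda-def}, and interchanging the limit with the summation --- justified by dominating the summands by a constant multiple of $|z-t|^{-2}$ via $|B_\nu'(t)|\simeq_\nu|t|^{-\nu-\frac12}$ and the integer-like spacing of $\mc{T}_{\nu+1}$ from Section~\ref{bessel-section} --- produces the interpolation identity for the constant function,
\[
1 = \frac{B_\nu^2(z)}{B_\nu'(0)^2 z^2} + \sum_{t\in\mc{T}_{\nu+1}} \frac{B_\nu^2(z)}{B_\nu'(t)^2(z-t)^2}\left(1 - \frac{B_\nu''(t)}{B_\nu'(t)}(z-t)\right).
\]
Subtracting this from the series for $\phi_\lambda$ cancels the origin term together with the constant, leaving a series over $\mc{T}_{\nu+1}$ only,
\begin{align*}
1 - \phi_\lambda(z) = \sum_{t\in\mc{T}_{\nu+1}} \biggl( &\bigl(1-\g_\lambda(t)\bigr)\frac{B_\nu^2(z)}{B_\nu'(t)^2(z-t)^2}\left(1 - \frac{B_\nu''(t)}{B_\nu'(t)}(z-t)\right) \\ &\qquad {}- \g_\lambda'(t)\frac{B_\nu^2(z)}{B_\nu'(t)^2(z-t)}\biggr),
\end{align*}
in which every coefficient vanishes as $\lambda\to0^+$.

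I would then integrate this identity against $\pi^{\nu+\frac12}(-\Gamma(\nu+\tfrac12))^{-1}\lambda^{\nu-\frac12}\,d\lambda$ over $(0,\infty)$. Fubini applies for the same reason the limit interchange did, the $t$-series converging like $\sum|z-t|^{-2}$ uniformly in the relevant parameters. By \eqref{homog-lambda-measure-2} the coefficient of the double-pole term integrates to $|t|^{-2\nu-1}$, and differentiating \eqref{homog-lambda-measure-2} in $t$ (legitimate since the differentiated $\lambda$-integral converges for $\nu>-1$) shows that the coefficient of the simple-pole term integrates to $\tfrac{d}{dt}|t|^{-2\nu-1}$. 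This exhibits $L_\nu$ as the Hermite interpolation of $t\mapsto|t|^{-2\nu-1}$ at the nodes $\mc{T}_{\nu+1}$.

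Finally, the Bessel differential equation yields $B_\nu''(t)/B_\nu'(t)=-(2\nu+1)/t$ for $t\in\mc{T}_{\nu+1}$, exactly the identity obtained for $A_\nu$ in the proof of Lemma~\ref{lemma2}, while $\tfrac{d}{dt}|t|^{-2\nu-1}=-(2\nu+1)|t|^{-2\nu-1}/t$. Since these two ratios coincide, the simple-pole contribution cancels the linear part of the double-pole contribution, collapsing the Hermite series to
\[
L_\nu(z) = \sum_{t\in\mc{T}_{\nu+1}} \frac{|t|^{-2\nu-1}B_\nu^2(z)}{B_\nu'(t)^2(z-t)^2}.
\]
The claim then follows from $L_{\nu,\tau}(z)=\tau^{2\nu+1}L_\nu(\tau z)$ and $\tau^{2\nu+1}|t|^{-2\nu-1}=|\tau^{-1}t|^{-2\nu-1}$. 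I expect the main obstacle to be the rigorous treatment of the origin: the constant $1$ is not in $L^1(w_\nu)$, so its interpolation identity cannot be read off directly from Gon\c{c}alves' theorem and must instead be produced as the $\lambda\to0^+$ limit above, with the dominated-convergence and Fubini justifications resting on the Bessel estimates of Section~\ref{bessel-section}. Once the origin term is shown to be $\lambda$-independent and cancelled, the remaining algebra mirrors Lemma~\ref{lemma2}.
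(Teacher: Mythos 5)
Your proof is correct, and its overall architecture (the Gon\c{c}alves interpolation formula at $\{0\}\cup\mc{T}_{\nu+1}$, the majorant property fixing the Hermite data to $(\g_\lambda(t),\g_\lambda'(t))$, integration against the measure in \eqref{homog-lambda-measure-2}, and the Bessel differential equation collapsing the Hermite series) coincides with the paper's. The one step you handle genuinely differently is the only genuinely new ingredient of this lemma relative to Lemma~\ref{lemma2}: producing the Hermite expansion of the constant function $1$ over the zeros of $B_\nu$ so that the origin node can be cancelled. The paper gets this from complex analysis: it integrates $(w-z)^{-1}B_\nu^{-2}(w)$ over expanding squares and uses $1/B_\nu(t)\to0$ along $\mc{T}_\nu$ (which holds precisely because $2\nu+1<0$) to show that the Mittag--Leffler expansion of $1/B_\nu^2$ has vanishing regular part; multiplying by $B_\nu^2$ then gives the expansion of $1$ directly, and \eqref{BSeries} follows by subtraction. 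You instead obtain the same identity as the $\lambda\to0^+$ limit of the Gon\c{c}alves expansion of $\mc{A}_{\pi\lambda,\nu}(z^2)$, with dominated convergence resting on the bounds $B_\nu'(t)^{-2}\simeq_\nu|t|^{2\nu+1}$ (bounded, again because $2\nu+1<0$) and $|\g_\lambda'(t)|\le c/|t|$ uniformly in $\lambda$. Both arguments ultimately exploit the same decay of $B_\nu'(t)^{-2}$; yours avoids the contour estimates and the separate appeal to Mittag--Leffler at the cost of a limit interchange, and it has the mild advantage of making explicit why the origin term is $\lambda$-independent ($\g_\lambda(0)=1$, $\g_\lambda'(0)=0$, $B_\nu''(0)=0$) and hence drops out of $1-\mc{A}_{\pi\lambda,\nu}(z^2)$. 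Your closing cancellation of the simple-pole terms against the linear part of the double poles, via $B_\nu''(t)/B_\nu'(t)=-(2\nu+1)/t=\big(\tfrac{d}{dt}|t|^{-2\nu-1}\big)/|t|^{-2\nu-1}$, is exactly the paper's final step.
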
 

\begin{proof}  Denote by $\Gamma_t$ the square in $\C$ with center at the origin and sides through $\pm  t$. Since $-1<\nu<-\frac12$, it follows that $1/B_\nu(t)\to 0$ if $|t|\to \infty$ through the points $t$ with $J_\nu( t) =0$. Moreover, for fixed $t$ we have $1/|B_\nu(t+iy)|\le 1/|B_\nu(t)|$, and  we have $1/|B_\nu(x+iy)|\to 0$ uniformly in $x$ as $|y|\to \infty$.  Integrating $(w-z)^{-1} B_\nu^{-2}(w)$  in $w$ over $\Gamma_t$ with $t\in \mc{T}_{\nu+1}$, applying the residue theorem, and letting  $|t|\to \infty$ shows that the regular part in the representation of $1/B_\nu^2$ obtained from the Mittag-Leffler theorem is equal to zero.

We combine this with the fact from \cite{Gon2017} that the function $z\mapsto \mc{A}_{\lambda,\nu }(z^2)$ has an analogous representation to \eqref{ASeries} with $\mc{T}_\nu$ replaced by $\mc{T}_{\nu+1}\cup\{0\}$ (this set contains the zeros of $B_\nu$) and $A_\nu$ replaced by $B_\nu$. It follows that
\begin{align}
1-\mc{A}_{\pi\lambda,\nu}(z^2) &= \sum_{t\in\mc{T}_{\nu+1}} \Bigg((1 - \g_\lambda(t))  \frac{B_\nu^2( z)}{B_\nu'(t)^2( z-t)^2}\left( 1 -\frac{B_\nu''(t)}{B_\nu'(t)}(z-t)\right)\nonumber\\
&\qquad \qquad\qquad - \g_\lambda'(t)  \frac{B_\nu^2(z)}{B_\nu'(t)^2(z-t)}  \Bigg).\label{BSeries}
\end{align}

The remaining steps utilize the differential equation
\[
 B_\nu''(z) +  \frac{2\nu+1}{ z} B_\nu'(z) + \left(1+\frac{\nu^2 - (\nu+1)^2}{ z^2}\right) B_\nu(z) =0
\]
and proceed analogously to the proof of Lemma \ref{lemma2}.
\end{proof}

\subsection{Gr\"unwald operator} The proof of Theorem 1(b) proceeds completely analogously to the arguments in Section \ref{Nu1/2} by replacing $\mc{T}_\nu$ with $\mc{T}_{\nu+1}$, the function $A_\nu$ by $B_\nu$, observing nonnegativity of $L_{\nu,\tau}$ from Lemma \ref{lemma7}, and estimating $fw_\nu(L_{\nu,\tau} w_\nu-1)$ using Lemma \ref{lemma6}. We omit the details.

\section{De Branges spaces}\label{dBspaces}

 For $1\le p\le \infty$ we denote by $H^p(\C^+)$ the space of analytic functions $F$ in the upper half plane $\C^+$ for which $\sup_{y>0} \|F(.+iy)\|_p$ is finite. We require a few facts from the theory of de Branges spaces. For a more complete picture we refer to de Branges \cite{dB1968} for $p=2$ and Baranov \cite{Bar2003} for arbitrary $p$.

 An entire function $E$ satisfying
\begin{align}\label{HBdef}
|E(z)|>|E(\bar{z})|
\end{align}
for $\Im z>0$ will be called a Hermite-Biehler function.  Throughout this article we assume that $E$ has no real zeros. We set
\[
\mc{H}^p(E) = \{F\text{ entire}: F/E, F^*/E \in H^p(\C^+)\},
\]
where $F^*$ is the entire function $F^*(z) = \overline{F(\bar{z})}$, and the norm is given by $F\mapsto \|F/E\|_p$.  We write $E = A-iB$ with real entire $A = 2^{-1}(E+E^*)$ and $B=2^{-1}i(E-E^*)$, and we denote by $\mc{T}_A$ the set of zeros of $A$. These are necessarily real by \eqref{HBdef}. We denote by $K_E$ the function
\begin{align}\label{repKernel}
K_E(w,z) = \frac{A(\bar{w})B(z) - A(z) B(\bar{w})}{\pi(z-\bar{w})},
\end{align}
and we observe that $z\mapsto K_E(w,z)$ is an entire function in $\mc{H}^p(E)$ for all $w\in\C$ and $p\in[0,\infty]$. The Cauchy integral formula for the upper half plane and the alternative representation $K_E(w,z) = [2\pi i(z-\bar{w})]^{-1}(E(\bar{w}) E^*(z) - E(z) E^*(\bar{w}))$ may be used to check that $K_E(w,z)$ is the reproducing kernel for $\mc{H}^2(E)$. A function $\varphi_E$ with the property $E(x) e^{i\varphi_E(x)}\in\R$ for all real $x$ is called a {\it phase} of $E$. As a consequence of \eqref{HBdef} it may be chosen to be analytic on an open set containing the real line.

If $\mc{B}_2(\tau,w)$ has bounded evaluation functionals, then by \cite[Theorem 23]{dB1968} it is isometrically equal to a space $\mc{H}^2(E)$.

Let $f$ be continuous and $f|E|^{-2}\in L^\infty(\R)$. We define the formal series $G_E f$ by
\[
G_E f(z) = \sum_{t\in \mc{T}_A} f(t) \frac{A^2(z)}{A'(t)^2(z-t)^2}.
\]

In the following $\{E_\tau:\tau>0\}$ is a collection of Hermite-Biehler functions $E_\tau$ of exponential type $\tau$.  We set $G_\tau f = G_{E_\tau} f$, $\mc{T}_\tau=\mc{T}_{A_\tau}$, and $\varphi_\tau = \varphi_{E_\tau}$ if the choice of $E_\tau$ is clear from the context. We consider $f$ with the following  properties.
\begin{enumerate}
\item For every $\varepsilon>0$ there exists $\delta>0$ and $\tau_0>0$ such that for all $x,t\in\R$ and all $\tau\ge \tau_0$
\begin{align}\label{f-ucont}
|x-t|<\delta\qquad \Longrightarrow \qquad \left| \frac{f(x)}{|E_\tau(x)|^2} - \frac{f(t)}{|E_\tau(t)|^2}\right|<\varepsilon
\end{align}

\item There exists $M>0$ so that for all $x\in\R$ and positive $\tau$
\begin{align}\label{f-bound}
\frac{f(x)}{|E_\tau(x)|^2} \le M
\end{align}
\end{enumerate}

If $|E_\tau|^{-2}$ converges to $w$  uniformly in $x$ as $\tau\to \infty$, then these conditions may be reformulated in terms of uniform continuity and uniform boundedness of $fw$.

\begin{theorem}\label{thm2} Let $E_\tau$, $\tau>0$, be a Hermite-Biehler entire function of exponential type $\tau$ without real zeros, and such that there exists $C>0$ so that $|\varphi_\tau'(x) - \tau|\le C$ for all $x$ and $\tau$. 

\begin{enumerate} 
\item If  $f:\R\to\C$ satisfies \eqref{f-ucont} and \eqref{f-bound}, then $G_\tau$ converges uniformly on compact sets, $G_\tau\in \mc{H}^\infty(E_\tau^2)$ and 
\[
 \lim_{\tau\to\infty} \| (G_\tau f -f)E_\tau^{-2}\|_\infty =0.
\]
\item If in addition there exists constants $c>0, d>0$ with  
\[
c \leq |E_\tau(x)|^2w(x) \leq d
\]
for all real $x$ and $\tau\ge \tau_0$, then 
\[
 \lim_{\tau\to\infty} \| (G_\tau f -f)w\|_\infty =0.
\]\label{partb-intro-thm1}
\end{enumerate}
\end{theorem}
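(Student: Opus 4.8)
The plan is to follow the proof of Theorem~\ref{thm1intro}(a) in Section~\ref{Nu1/2} step for step, with the weight taken to be $w=|E_\tau|^{-2}$ and the approximating function taken to be $L_\tau = G_\tau(|E_\tau|^2)$, the Gr\"unwald series built from the reciprocal of the weight. Since $A_\tau(t)=0$ at a node $t\in\mc{T}_\tau$, one has $|E_\tau(t)|^2=B_\tau(t)^2$, so $L_\tau$ interpolates $|E_\tau|^2$ at the nodes. Writing $\ell_t(x)=A_\tau(x)/(A_\tau'(t)(x-t))$ and introducing the nonnegative Fej\'er masses
\[
p_t(x)=|E_\tau(t)|^2\,|E_\tau(x)|^{-2}\,\ell_t(x)^2\ge 0,
\]
a short computation (using $w^{-1}(t)w(t)=1$) shows that the identity \eqref{intro-id} expands $(G_\tau f-f)w$ into a first term $\sum_{t\in\mc{T}_\tau}\bigl(f(t)w(t)-f(x)w(x)\bigr)p_t(x)$ and a second term $f(x)w(x)\bigl(L_\tau(x)w(x)-1\bigr)$, exactly as in \eqref{DifferenceExpansion}, with $\sum_t p_t = L_\tau w$. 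Everything then reduces to two facts about the $p_t$: that $\sum_t p_t(x)\to 1$ uniformly, and that the mass at distance $\ge\delta$ from $x$ tends to $0$.

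The key is an exact identity supplied by the reproducing kernel. From \eqref{repKernel} and the phase $\varphi_\tau$ one has $K_{E_\tau}(x,x)=\pi^{-1}|E_\tau(x)|^2\varphi_\tau'(x)$, while the sampling expansion (the analogue of \eqref{introRepKernel}, valid by de Branges \cite[Theorem~22]{dB1968}) gives $K_{E_\tau}(x,x)=\sum_t K_{E_\tau}(t,t)\ell_t(x)^2$. Dividing by $\pi^{-1}|E_\tau(x)|^2$ and inserting $K_{E_\tau}(t,t)=\pi^{-1}|E_\tau(t)|^2\varphi_\tau'(t)$ yields the clean identity $\sum_t \varphi_\tau'(t)\,p_t(x)=\varphi_\tau'(x)$. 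Because $|\varphi_\tau'-\tau|\le C$ on all of $\R$, replacing each factor $\varphi_\tau'$ by its bounds $\tau\pm C$ sandwiches
\[
\frac{\tau-C}{\tau+C}\le \sum_{t\in\mc{T}_\tau} p_t(x)\le \frac{\tau+C}{\tau-C},
\]
so $L_\tau w=\sum_t p_t\to 1$ uniformly in $x$ and stays uniformly bounded. The same bound, combined with $|G_\tau f|\le M L_\tau\le M\tfrac{\tau+C}{\tau-C}|E_\tau|^2$ on $\R$ from \eqref{f-bound} and the exponential type $2\tau$ of $A_\tau^2$, places $G_\tau f$ in $\mc{H}^\infty(E_\tau^2)$ by the usual Phragm\'en--Lindel\"of argument for Hermite--Biehler functions of type $2\tau$.

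For the first term I would split $\mc{T}_\tau$ at $|x-t|=\delta$ as in \eqref{1aSumPlus}--\eqref{1aSumMinus}. On the near part \eqref{f-ucont} bounds each difference by $\varepsilon$, so that piece is at most $\varepsilon\sum_t p_t\le \varepsilon\tfrac{\tau+C}{\tau-C}$. On the far part I use $|fw|\le M$ from \eqref{f-bound} together with the pointwise estimate $p_t(x)\le \varphi_\tau'(t)^{-2}(x-t)^{-2}\le (\tau-C)^{-2}(x-t)^{-2}$, which follows from $A_\tau'(t)^2=|E_\tau(t)|^2\varphi_\tau'(t)^2$ and $A_\tau^2\le|E_\tau|^2$; since consecutive nodes satisfy $t_{k+1}-t_k\ge \pi/(\tau+C)$ (the phase increases by $\pi$ between consecutive zeros of $A_\tau$ while $\varphi_\tau'\le\tau+C$), recognizing the far sum as a Riemann sum for the integral of $u\mapsto (x-u)^{-2}$ over $\R\backslash[x-\delta,x+\delta]$ bounds it by $O\bigl((\tau\delta)^{-1}\bigr)\to 0$. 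The second term is controlled by $|fw|\le M$ and $|L_\tau w-1|\to 0$, proving (a). Part (b) is then immediate: writing $(G_\tau f-f)w=(|E_\tau|^2 w)\,(G_\tau f-f)|E_\tau|^{-2}$ and using $|E_\tau|^2 w\le d$ gives $\|(G_\tau f-f)w\|_\infty\le d\,\|(G_\tau f-f)|E_\tau|^{-2}\|_\infty\to 0$.

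The main obstacle, and where the hypothesis $|\varphi_\tau'-\tau|\le C$ does all the work, is extracting the two uniform-in-$\tau$ facts about $p_t$ from the single exact identity $\sum_t\varphi_\tau'(t)p_t=\varphi_\tau'$: the partition sandwich is immediate, but the far-tail estimate additionally requires the uniform lower bound $\pi/(\tau+C)$ on the node spacing and the uniform upper bound on $p_t$, both of which must be read off directly from the phase. A secondary technical point is verifying that the sampling expansion holds for each $E_\tau$ under the stated hypotheses, so that de Branges \cite[Theorem~22]{dB1968} applies, and that the constants produced are genuinely uniform as $\tau\to\infty$.
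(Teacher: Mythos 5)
Your proposal is correct and follows essentially the same route as the paper: both rest on the de Branges sampling identity $K_\tau(x,x)=\sum_t K_\tau(t,t)A_\tau^2(x)/(A_\tau'(t)^2(x-t)^2)$ obtained from \cite[Theorem 22]{dB1968}, the phase identity $\pi K_\tau(x,x)=\varphi_\tau'(x)|E_\tau(x)|^2$, and the hypothesis $|\varphi_\tau'-\tau|\le C$ to control both the total interpolation mass and the node spacing, followed by the near/far split at $|x-t|=\delta$ and the Riemann-sum tail estimate. Your normalization by $|E_\tau|^{-2}$ with the sandwich $\tfrac{\tau-C}{\tau+C}\le\sum_t p_t\le\tfrac{\tau+C}{\tau-C}$ is a slightly cleaner bookkeeping than the paper's normalization by $K_\tau(x,x)$ (which instead adds and subtracts $f(t)(\varphi_\tau'(x)|E_\tau(t)|^2)^{-1}$ and uses $|\varphi_\tau'(x)^{-1}-\varphi_\tau'(t)^{-1}|\le c\tau^{-2}$), but the substance is identical.
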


\begin{proof} Let $\varepsilon>0, \delta>0, \tau_0>0$ as in \eqref{f-ucont}.  We apply \cite[Theorem 22]{dB1968} with $iE$ in place of $E$ to $z\mapsto  K_\tau(x,z)$ with $x\in\R$ to obtain
\[
K_\tau(x,x) = \|K_\tau(x,.)/E\|_2^2 = \sum_{t\in \mc{T}_\tau} \frac{|K_\tau(x,t)|^2}{K_\tau(t,t)}.
\]

Equation \eqref{repKernel} leads to
\begin{align}\label{weightedFejerIdentity}
K_\tau(x,x) = \sum_{t\in \mc{T}_\tau} K_\tau(t,t) \frac{A_\tau^2(x)}{A_\tau'(t)^2(x-t)^2}.
\end{align}

It follows that
\begin{align*}
\frac{f(x) - G_\tau(x)}{K_\tau(x,x)} &= \frac{1}{K_\tau(x,x)} \sum_{t\in \mc{T}_\tau} \left( \frac{f(x)}{ K_\tau(x,x) } - \frac{f(t)}{K_\tau(t,t)} \right) \frac{K_\tau(t,t) A_\tau^2(x)}{A_\tau'(t)^2(x-t)^2}. 
\end{align*}

We recall the identity $\pi K_\tau(x,x) = \varphi_\tau'(x) |E_\tau(x)|^2$ from \cite[Problem 48]{dB1968}. In the range $|x-t|\ge \delta$ we use $\pi K_\tau(t,t) A_\tau'(t)^{-2} = (\pi \varphi_\tau'(t))^{-1}\le c \tau^{-1}$. In the range $|x-t|<\delta$ we add and subtract $f(t) (\varphi_\tau'(x) |E_\tau(t)|^2)^{-1}$ and use $|\varphi_\tau'(x)^{-1} - \varphi_\tau'(t)^{-1}|\le c\tau^{-2}$ with $c$ independent of $t$ and $x$. Since the values in $\mc{T}_\tau$ are the points where $E_\tau$ is purely imaginary we have $\pi = \varphi_\tau(t) - \varphi_\tau(s)$ for consecutive $s,t\in\mc{T}_\tau$, and the mean value theorem gives $\tau^{-1} \le c(s-t)$.  With these estimates (a) can be proved analogous to \eqref{1aSumPlus}, \eqref{Mbound} and \eqref{1aSumMinus}, and we leave the details to the reader. Statement (b) is an immediate consequence.
\end{proof}

Let $\tau\ge \tau_0\ge 0$, and assume that $W$ is a Hermite-Biehler entire function of exponential type $\tau_0$. Define $E_\tau(z) = e^{(\tau-\tau_0)z} W(z)$ and real entire $A_{\tau,\alpha}, B_{\tau,\alpha}$ by $e^{i\alpha} E_\tau = A_{\tau,\alpha} - i B_{\tau,\alpha}$. Define formal series $G_{\tau,\alpha} f$ by
\[
G_{\tau,\alpha}f(z) = \sum_{t\in \mc{T}_{\tau,\alpha}} f(t) \frac{A_{\tau,\alpha}^2(z)}{A_{\tau,\alpha}'(t)^2(z-t)^2},
\]
where $\mc{T}_{\tau,\alpha}$ is the set of (real) zeros of $A_{\tau,\alpha}$. A direct consequence of Theorem \ref{thm2} is the following statement.

\begin{corollary} Let $f|W|^{-2}\in L^\infty(\R)$ be uniformly continuous. Then $G_{\tau,\alpha}f$ defines an entire function of exponential type $2\tau$, and
\[
\lim_{\tau\to\infty} \| (G_{\tau,\alpha}f - f)|W|^{-2}\|_\infty =0.
\]

\end{corollary}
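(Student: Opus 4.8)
The plan is to deduce the corollary by specializing Theorem \ref{thm2} to the family $E_\tau$, after a unimodular rotation, and then checking its three hypotheses. First I would pass to the rotated function $\widetilde E_\tau = e^{i\alpha} E_\tau$. Since multiplication by the unimodular constant $e^{i\alpha}$ preserves the defining inequality \eqref{HBdef}, $\widetilde E_\tau$ is again Hermite-Biehler, and by construction its real and imaginary parts are exactly $A_{\tau,\alpha}$ and $B_{\tau,\alpha}$, so that $\mc T_{A_{\tau,\alpha}} = \mc T_{\tau,\alpha}$ and the operator built from $\widetilde E_\tau$ is precisely $G_{\tau,\alpha} f$. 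Because $W$ is Hermite-Biehler it has no real zeros, and the modulating factor $e^{-i(\tau-\tau_0)z}$ (which has modulus one on $\R$ and raises the type by $\tau-\tau_0$) is zero-free; hence $\widetilde E_\tau$ has no real zeros and has exponential type $\tau_0 + (\tau-\tau_0) = \tau$. Thus $\widetilde E_\tau$ is an admissible structure function for Theorem \ref{thm2}.

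The second step is to note that the weight is independent of $\tau$. Since $|e^{-i(\tau-\tau_0)x}| = 1$ for real $x$, we have $|\widetilde E_\tau(x)| = |W(x)|$, and hence $|\widetilde E_\tau(x)|^{-2} = |W(x)|^{-2}$. Consequently the hypotheses \eqref{f-ucont} and \eqref{f-bound} reduce to the uniform continuity and boundedness of $f|W|^{-2}$, with a modulus of continuity $\delta=\delta(\varepsilon)$ and a bound $M = \||f||W|^{-2}\|_\infty$ that do not depend on $\tau$; these are exactly the assumptions on $f$. Because $|\widetilde E_\tau(x)|^2 |W(x)|^{-2}\equiv 1$, part (b) of Theorem \ref{thm2} applies with $c=d=1$, and its conclusion is precisely $\|(G_{\tau,\alpha} f - f)|W|^{-2}\|_\infty \to 0$. (Equivalently, part (a) already delivers the claim, since $|\widetilde E_\tau(x)^{-2}| = |W(x)|^{-2}$.) That $G_{\tau,\alpha} f$ is entire of exponential type $2\tau$ then follows from the membership $G_{\tau,\alpha} f \in \mc H^\infty(\widetilde E_\tau^2)$ together with the uniform convergence on compact sets granted by Theorem \ref{thm2}.

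The one genuinely substantive point, and the step I expect to be the \emph{main obstacle}, is the phase hypothesis $|\varphi_\tau'(x) - \tau|\le C$ of Theorem \ref{thm2}. Here I would use that phases add under multiplication: if $\varphi_W$ is a phase of $W$, then $x\mapsto \varphi_W(x) + (\tau-\tau_0)x - \alpha$ is a phase of $\widetilde E_\tau$, the linear term $(\tau-\tau_0)x$ coming from $e^{-i(\tau-\tau_0)z}$ and the constant $-\alpha$ from the rotation $e^{i\alpha}$. Differentiating gives
\[
\varphi_{\widetilde E_\tau}'(x) - \tau = \varphi_W'(x) - \tau_0,
\]
a quantity independent of $\tau$. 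Thus the $\tau$-uniform phase bound of Theorem \ref{thm2} holds as soon as $\varphi_W' - \tau_0$ is bounded on $\R$, with $C = \sup_x |\varphi_W'(x) - \tau_0|$. This boundedness is the crux: via the identity $\pi K_W(x,x) = \varphi_W'(x)|W(x)|^2$ it is equivalent to the comparison $\pi K_W(x,x)\le C|W(x)|^2$, i.e. to the regularity of the fixed structure function $W$, and it is this property that must be in force for the corollary. Once $C$ is secured, all remaining estimates are inherited verbatim from the proof of Theorem \ref{thm2}, and the conclusion of the corollary is immediate.
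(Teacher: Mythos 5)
Your proposal is correct and takes the same route the paper intends: the paper offers no argument beyond calling the corollary a direct consequence of Theorem \ref{thm2}, and your reduction --- rotating by $e^{i\alpha}$, reading the paper's factor $e^{(\tau-\tau_0)z}$ as the intended $e^{-i(\tau-\tau_0)z}$ so that $|e^{i\alpha}E_\tau(x)|=|W(x)|$ on $\R$ and \eqref{f-ucont}, \eqref{f-bound} become uniform continuity and boundedness of $f|W|^{-2}$, and computing $\varphi_\tau'(x)-\tau=\varphi_W'(x)-\tau_0$ --- is exactly how that consequence is drawn. You are also right to single out the phase bound as the one hypothesis that does not come for free: boundedness of $\varphi_W'$ on $\R$ is not automatic for a Hermite--Biehler function of exponential type without real zeros, so you have in fact identified an implicit assumption that the corollary leaves unstated.
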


This corollary includes for example the Poisson measure $dx/(1+x^2)$. 

\medskip

We finally describe some open questions.  It would be of interest to have a characterization of those entire functions that may be used in place of $A_\tau$ and give uniform convergence of the corresponding Gr\"unwald operator, but even for simple measures this seems out of reach.  We consider the concrete example $w(x) = x^2+1$. First, for  the Hermite-Biehler entire function
\begin{align}\label{we-def}
E_\tau(z) = \left(\frac{2}{\sinh(2\tau)}\right)^{\frac12} \frac{\sin(\tau(z+i))}{z+i}
\end{align}
we observe that $\tanh(\tau) \le (x^2+1)|E_\tau(x)|^2 \le \coth(\tau)$. Since
\[
K_\tau(x,x) = \frac{1}{\pi}\left(\frac{\tau}{(x^2+1)|E_\tau(x)|^2} - \frac{1}{x^2+1}\right)|E_\tau(x)|^2,
\]
we obtain 
\begin{align*}
\varphi'(x) &= \tau \left(\frac{\text{sinh}(2\tau)}{\text{cosh}(2\tau) - \cos(2\tau x)}\right) - \frac{1}{x^2+1},\\
A_\tau(z) &=  \sqrt{\frac{2}{\text{sinh}(2\tau)}} \frac{z\text{cosh}(\tau)\sin(\tau z) + \text{sinh}(\tau)\cos(\tau z)}{z^2 + 1}.
\end{align*}

Hence for $f$ with $fw\in L^\infty(\R)$ and uniformly continuous on $\R$ the interpolation series $G_{E_\tau} f$ satisfies $\| (G_{E_\tau} f - f)w\|_\infty\to 0$. This is true in particular for $f = 1/w$. 

On the other hand, it can be shown that starting with an even Polya-Laguerre function $A$ of exponential type $1$ with $A(0)=1$, $A'(t)\le C/t$  and $|s-t|\ge D>0$ for all zeros $s,t$ of $A$ the dilation $A_\tau(z) = A(\tau z)$ has the property that $G_{A_\tau} (1/w)(0)\to\infty$ as $\tau\to\infty$. In particular, using the dilation $E_1(\tau z)$ in place of \eqref{we-def} fails to give a uniformly converging interpolation.

In contrast, $A_\tau(z) = \cos(\tau z)$ has the property that $\|(G_{A_\tau} f-f)w\|_\infty\to0$ as $\tau\to \infty$ for $f$ with $fw\in L^\infty(\R)$ that is uniformly continuous on $\R$; the series
\[
G_{A_\tau} w^{-1}(z) = \sum_{\cos(\tau t)=0} \frac{1}{t^2+1} \frac{\cos^2(\tau z)}{\tau^2(z-t)^2}
\]
has a closed form that may be used to show $w(x) G_{A_\tau} w^{-1}(x) = 1+ \tau^{-1} p_\tau(x)$ with $|p_\tau(x)|\le2$ for real $x$, which allows an argument analogous to the proofs of Theorems \ref{thm1intro} and \ref{thm2}.

\bibliographystyle{amsplain}

\bibliography{references}

\end{document}